\documentclass[11pt]{article}

\usepackage{amssymb,amsmath,amsfonts,epsf,amsmath}
\usepackage{enumerate}
\usepackage[ruled,vlined,linesnumbered]{algorithm2e}

\usepackage[english]{babel}
\usepackage{diagbox,comment,xparse}

\usepackage{tikz,multicol}

\usetikzlibrary{decorations.pathreplacing,automata,calc,positioning}
\usetikzlibrary{arrows}

\newtheorem{thm}{Theorem}[section]

\newtheorem{conj}[thm]{Conjecture}
\newtheorem{cor}[thm]{Corollary}
\newtheorem{lema}[thm]{Lemma}
\newtheorem{claim}{Claim}
\newtheorem{remark}[thm]{Remark}

\newtheorem{prob}{Problem}

\newenvironment{claimproof}[1]{{\it\noindent{Proof.}}\space#1}{\footnotesize \hfill \ensuremath{(\square)} \medskip}

\newcommand{\efface}[1]{}
\newcommand{\proof}{\noindent{\bf Proof.\ }}
\newcommand{\qed}{\hfill $\square$ \medskip}

\newcommand{\spq}{{\sigma_{(p,q)}}}
\newcommand{\senaq}{{\sigma_{(1,q)}}}

\def\cp{\,\square\,}

\textwidth 14.5cm
\textheight 21cm
\oddsidemargin 0.4cm
\evensidemargin 0.4cm
\voffset -1cm

\begin{document}

\title{Spreading in graphs}

\author{Bo\v stjan Bre\v sar$^{a,b}$, Tanja Dravec$^{a,b}$, Aysel Erey$^{c}$, and
Jaka Hed\v zet$^{b,a}$}

\date{}

\maketitle

\begin{center}
$^a$ Faculty of Natural Sciences and Mathematics, University of Maribor, Slovenia\\
\medskip

$^b$ Institute of Mathematics, Physics and Mechanics, Ljubljana, Slovenia\\
\medskip

$^c$ Department of Mathematics and Statistics, Utah State University, USA \\
\end{center}

\begin{abstract}
Several concepts that model processes of spreading (of information, disease, objects, etc.) in graphs or networks have been studied. In many contexts, we assume that some vertices of a graph $G$ are contaminated initially,  before the process starts. By the $q$-forcing rule, a contaminated vertex having at most $q$ uncontaminated neighbors enforces all the neighbors to become contaminated, while by the $p$-percolation rule, an uncontaminated vertex becomes contaminated if at least $p$ of its neighbors are contaminated. If given a set $S$ of initially contaminated vertices all vertices eventually become contaminated when continuously applying the $q$-forcing rule (respectively the $p$-percolation rule), $S$ is called a $q$-forcing set (respectively, a $p$-percolating set) in $G$. In this paper, we consider sets $S$ that are at the same time $q$-forcing sets and $p$-percolating sets, and call them $(p,q)$-spreading sets. Given positive integers $p$ and $q$, the minimum cardinality of a $(p,q)$-spreading set in $G$ is a $(p,q)$-spreading number, $\sigma_{(p,q)}(G)$, of $G$. While $q$-forcing sets have been studied in a dozen of papers, the decision version of the corresponding graph invariant has not been considered earlier, and we fill the gap by proving its NP-completeness. This, in turn, enables us to prove the NP-completeness of the decision version of the $(p,q)$-spreading number in graphs for an arbitrary choice of $p$ and $q$.  Again, for every $p\in \mathbb{N}$ and $q\in\mathbb{N}\cup\{\infty\}$, we find a linear-time algorithm for determining the $(p,q)$-spreading number of a tree, where in the case $p\ge 2$ we apply Riedl's algorithm from [Largest and smallest minimal percolating sets in trees,
Electron.\ J.\ Combin.\ 19 (2012) Paper 64] on $p$-percolation in trees. In addition, we present a lower and an upper bound on the $(p,q)$-spreading number of a tree and characterize extremal families of trees. In the case of square grids, we combine some results of Bollob\'{a}s from [The Art of Mathematics: Coffee Time in Memphis. Cambridge Univ.\ Press, New York, 2006], and the AIM Minimum Rank-Special Graphs Work Group from [Zero forcing sets and the minimum rank of graphs, Linear algebra Appl.\ 428 2008 1628--1648], and new results on $(2,1)$-spreading and $(4,q)$-spreading to obtain $\sigma_{(p,q)}(P_m\Box P_n)$ for all $(p,q)\in (\mathbb{N}\setminus\{3\})\times (\mathbb{N}\cup\{\infty\})$ and all $m,n\in\mathbb{N}$. 
\end{abstract}

\noindent{\bf Keywords:}  bootstrap percolation, zero forcing, tree, grid

\medskip
\noindent{\bf AMS Subj. Class.:}  05C35, 05C75, 05C85
\section{Introduction}

Zero forcing was independently introduced by mathematical physicists in the study of quantum systems~\cite{BG-2007} and by linear algebraists in connection with the maximum nullity of real symmetric matrices~\cite{AIM}. Since then, zero forcing received a lot of attention by graph theorists; see~\cite{bbgk, gprs-2016,kks-2019,l-2019} for some recent papers and the monograph~\cite{HLS} that surveys various studies of zero forcing. The notion of zero forcing appears also in the analysis of the diverse processes, in which the need of uniform framework led to the introduction of the generalization of zero forcing named $k$-forcing~\cite{acdp}.

Suppose that vertices of a graph $G$ are assigned one of the two possible colors,  white and blue. The color change rule in $k$-forcing is defined as follows: {\it if a blue vertex $u$ has at most $k$ white neighbors, then change color of all white neighbors of $u$ to blue}. In  particular, when $k=1$, this is the color change rule for zero forcing. 
If for a set of vertices $S \subseteq V(G)$, which are initially colored blue, continuous application of the color change rule enforces all vertices of $G$ to become blue, then $S$ is a {\em $k$-forcing set} of $G$ (resp., a {\em zero forcing set} of $G$ when $k=1$). The {\it $k$-forcing number} of $G$, $F_k(G)$, is the minimum cardinality of a $k$-forcing set of $G$, while the {\em zero forcing number}, when $k=1$, is denoted by $Z(G)$, and so $Z(G)=F_1(G)$.

The concept of bootstrap percolation has a similar definition to that of zero forcing, yet a different motivation. It was introduced as a simplified model of a magnetic system in 1979~\cite{CLR-1979}, and was later studied on random graphs and also on deterministic graphs. Let us mention that one of the first graphs in which bootstrap percolation was studied were grids~\cite{BP-1998,Bol-2006}. 

Again let all vertices of a graph $G$ be colored either white or blue. (In some applicable contexts, blue vertices are called ``infected'' and white vertices ``uninfected''.)  A set $S \subseteq V(G)$ of blue vertices is an {\em $r$-neighbor bootstrap percolating set}, or simply, an {\em $r$-percolating set} of $G$ if by repeatedly applying the rule: {\it if a white vertex $u$ has at least $r$ blue (infected) neighbors, then change color of $u$ to blue}, all vertices of $G$ eventually become blue. The {\em $r$-neighbor bootstrap percolation number} of $G$, $m(G,r)$, is the minimum cardinality of an $r$-neighbor bootstrap percolating set of $G$. 

Motivated by the similarity of the definitions of the above two concepts, we introduce a common generalization of $k$-forcing and bootstrap percolation. 
Let $p \in \mathbb{N}$ and $q \in \mathbb{N} \cup \lbrace \infty \rbrace$, and let vertices of a graph $G$ be colored either white or blue. If a white vertex $w$ has at least $p$ blue neighbors, and one of the blue neighbors of $w$ has at most $q$ white neighbors, then by the {\em spreading color change rule} the color of $w$ is changed to blue. A set $S$ is a $(p,q)$-{\em spreading set} for $G$ if initially exactly the vertices of $S$ are colored blue and by repeatedly applying the spreading color change rule all the vertices of $G$ are eventually turned to blue. The $(p,q)$-{\em spreading number}, $\spq(G)$, of a graph $G$ is the minimum cardinality of a $(p,q)$-spreading set.

The aim of this paper is to initiate the study of spreading in graphs. The paper is organized as follows. In the next section, we specify connections with some specializations of $(p,q)$-spreading, such as bootstrap percolation, $P_3$-convexity and $q$-forcing. We also give some basic preliminary results. In Section~\ref{sec:NP}, we concentrate on complexity issues regarding the computation of the $(p,q)$-spreading number. We prove that the decision version of determining $\spq(G)$, where $G$ is an arbitrary graph, is an NP-complete problem. We do this by translating the decision version of determining the $q$-forcing number of $G$, for which we also prove that it is NP-complete. The latter result was, to the best of our knowledge, not known before. In Section~\ref{sec:trees}, we consider spreading in trees. First, we present several algorithms to compute $\spq(T)$, where $T$ is a tree. We prove that $F_q(T)$ equals the size of a smallest partition of $T$ into induced subtrees of maximum degree at most $q$, which enables us to present a linear-time algorithm to determine $\sigma_{(1,q)}(T)$ for a tree $T$ and $q\in \mathbb{N}$. We also note that Riedl's algorithm~\cite{algoritem}, which computes the $p$-neighbor bootstrap percolation of a tree in linear time, where $p\ge 2$, works for any $q\in \mathbb{N}$. Second, we also find an upper and a lower bound for the $(p,q)$-spreading number of a tree and characterize the trees that attain these bounds. Section~\ref{sec:grids} is devoted to spreading in square grids. By combining some known results (say, about the 2-neighbor bootstrap percolation in grids due to Bollob\'{a}s~\cite{Bol-2006}) with some new results, we obtain the $(p,q)$-spreading number of all grids for all $p$ and $q$ with the sole exception when $p=3$. Finally, we state some concluding remarks and propose several open problems in Section~\ref{sec:conclude}.

In the rest of this section, we present the notation used throughout the paper. All graphs considered in this paper are finite, simple, and undirected. Let $[n]=\{1,2,\ldots ,n\}$, where $n \in \mathbb{N}$. For a graph $G=(V,E)$ and $S \subseteq V(G)$ we write $G[S]$ for the subgraph of $G$ induced by $S$. The {\em neighborhood} of $v \in V(G)$, $N_G(v)$, is the set of all vertices in $G$ adjacent to $v$.

Similarly, for $S \subseteq V(G)$ and $v\in V(G)$, we let $N_S(v)= \{u \in S; vu \in E(G)\}$. The {\em degree} of a vertex $v$ is deg$_G(v)=|N_G(v)|$, and $\Delta(G)=\max{\{\textrm{deg}_G(v); v \in V(G)\}}, \delta(G)=\min{\{\textrm{deg}_G(v); v \in V(G)\}}$. For $u,v \in V(G)$, $d_G(u,v)$ denotes the length of a shortest $u,v$-path in $G$. For $v \in V(G)$ and a subgraph $H \subseteq G$ of a graph $G$, $d_G(u,H)=\min{\{d_G(u,v); v\in V(H)\}}.$
A subset of vertices $S$ is called a {\it vertex cover} of $G$ if every edge of $G$ has an end-vertex in $S$. 

The  {\em Cartesian product} $G\cp H$ of graphs $G$ and $H$ is defined as the graph with $V(G\cp H)=V(G)\times V(H)$, and $(g,h)(g',h')\in E(G\cp H)$ if either ($g=g'$ and $hh\in E(H)$) or ($gg'\in E(G)$ and $h=h'$).

\section{Preliminary results}
\label{sec:prelim}

\begin{table}
\begin{center}
\begin{tabular}{ |c|c|c| } 
 \hline
 $p$ & $q$ & $(p,q)$-spreading\\
 \hline
 \hline
 $1$ & $1$ & zero-forcing\\
 $1$ & $q$ & $q$-forcing\\
 $p$ & $\infty$ & $p$-bootstrap percolation\\
 $2$ & $\infty$ & $P_3$-convexity\\
 \hline
 \end{tabular}
 \caption{Specializations of $(p,q)$-spreading.}
 \label{tab:con}
 \end{center}
 \end{table}
 Note that $(p,q)$-spreading is a common generalization of bootstrap percolation and zero forcing. The $(1,k)$-spreading number is exactly the $k$-forcing number and the $(r,\infty)$-spreading number is exactly the $r$-neighbor bootstrap percolation number. For $r=2$ this is also identical to the concept that arises from $P_3$-convexity known as the $P_3$-hull number  (see~\cite{bv-2019+,CPR} for more on this concept), so the $(2,\infty)$-spreading number is at the same time the 2-bootstrap percolation number and the $P_3$-hull number. Table~\ref{tab:con} presents these connections. 

By the definition of the $(p,q)$-spreading number we immediately get the following observations. 

\begin{remark}\label{r:basic}
 Let $G=(V,E)$ be a graph with maximum degree $\Delta$ and let $S$ be a $(p,q)$-spreading set of $G$. The following statements hold;
\begin{enumerate}[(1)]
\item if $\deg(v) < p$ then $v \in S$;
\item if $\Delta < p$ then $\spq(G) = |V(G)|$;
\item if $\Delta \leq q$ then $\spq(G) = \sigma_{(p,\infty)}(G)$;
\item $\spq(G) \geq \min{\{p, |V(G)|\}}$;
\item $S$ is a $(p,q+1)$-spreading set, and $\spq(G) \geq \sigma_{(p,q+1)}(G)$;
\item $S$ is a $(p-1,q)$-spreading set, and $\spq(G) \geq \sigma_{(p-1,q)}(G)$.
\end{enumerate}
\end{remark}

Next, we determine the $(p,q)$-spreading numbers in some basic graph classes for any $p \in \mathbb{N}$ and any $q \in {\mathbb{N}} \cup \{\infty\}$. Since $\Delta(P_n)=\Delta(C_n)=2$, Remark~\ref{r:basic}(2) implies that $\spq(P_n)=\spq(C_n)=n$ for $p > 2$. 
For $p=1$, $\sigma_{(1,q)}(P_n)=F_q(P_n)=1$ and for $q \geq 2$, $\sigma_{(2,1)}(P_n) \geq \sigma_{(2,q)}(P_n) = \sigma_{(2,\infty)}(P_n)=m(P_n,2) = \lceil \frac{n+1}{2} \rceil$ by Remark~\ref{r:basic}(3) and (5). For $q \geq 2$ we again easily deduce that $\lceil \frac{n+1}{2} \rceil =\sigma_{(2,1)}(C_n) \geq \sigma_{(2,q)}(C_n)=m(C_n,2)= \lceil \frac{n}{2} \rceil$. In addition,  

\begin{displaymath}
\sigma_{(1,q)}(C_n)=F_q(C_n)=
    \begin{cases}
        2 & \text{if } q =1\\
        1 & \text{if } q > 1.
    \end{cases}
\end{displaymath}

The spreading number can be easily computed also for complete graphs. It holds

\begin{displaymath}
\spq(K_n)=
    \begin{cases}
        p & \text{if } p+q \geq n\\
        n-q & \text{if } p+q < n.
    \end{cases}
\end{displaymath}

Finally, the $(p,q)$-spreading number of a complete bipartite graph $K_{r,s}$ with $r \geq s$, depends on the relation between $p,q,r$ and $s$. For example, if $s < p \leq r$, then we get

\begin{displaymath}
\spq(K_{r,s})=
    \begin{cases}
        r & \text{if } q \geq s\\
        r+s-q & \text{if } q < s.
    \end{cases}
\end{displaymath}

We end the section with a trivial result that will be needed later.

\begin{lema} \label{lema: delta > q}
Every $(p,q)$-spreading set $S$ contains a vertex $v$ such that $$|N_S(v)|\geq \deg_G(v)-q.$$
\end{lema}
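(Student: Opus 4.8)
The plan is to read off the desired vertex $v$ from the very first application of the spreading color change rule. The key observation is that, before the rule has been applied even once, the set of blue vertices is \emph{exactly} $S$; so whatever vertex triggers the first color change does so relative to the coloring in which $S$ is blue and everything else is white.

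First I would dispose of the degenerate cases. If $q=\infty$ the claimed inequality $|N_S(v)|\ge\deg_G(v)-q$ is vacuously true for any $v\in S$ (and $S\neq\varnothing$ since $V(G)\neq\varnothing$ and the rule needs at least $p\ge 1$ blue neighbors to fire), so assume $q$ is finite. If $S=V(G)$, then for any $v\in S$ we have $N_S(v)=N_G(v)$, hence $|N_S(v)|=\deg_G(v)\ge\deg_G(v)-q$; so assume $S\subsetneq V(G)$, i.e.\ at least one vertex is white initially. Since $S$ is a $(p,q)$-spreading set, every vertex eventually turns blue, and therefore the spreading color change rule is applied at least once.

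Now consider the first vertex $w$ that is recolored. By the definition of the rule, at that moment $w$ has at least $p$ blue neighbors, and at least one blue neighbor $u$ of $w$ has at most $q$ white neighbors. But this first recoloring happens with respect to the initial coloring, where the blue set is precisely $S$; hence the white neighbors of $u$ are exactly the vertices of $N_G(u)\setminus S$, so $u$ has $\deg_G(u)-|N_S(u)|$ white neighbors. The requirement $\deg_G(u)-|N_S(u)|\le q$ rearranges to $|N_S(u)|\ge\deg_G(u)-q$, so $v=u$ is the sought vertex.

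There is essentially no obstacle here — as the statement already signals, the result is trivial — the one point requiring a moment's care is that the count of white neighbors of the triggering vertex must be taken with respect to the \emph{initial} coloring, when the blue set is still just $S$ and has not yet grown; reading it off at a later step would only give a weaker bound.
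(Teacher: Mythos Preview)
Your proof is correct and takes essentially the same approach as the paper: both argue that the first application of the color change rule requires a blue vertex $u\in S$ with at most $q$ white neighbors, i.e.\ at most $q$ neighbors outside $S$, which is exactly the desired inequality. The paper phrases this as a one-line contrapositive (``if every vertex in $S$ has more than $q$ neighbors outside $S$, the rule cannot begin''), while you spell out the direct version and handle the degenerate cases explicitly, but the idea is identical.
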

\begin{proof}
If every vertex in $S$ has more than $q$ neighbors outside $S$, then $S$ cannot be a $(p,q)$-spreading set, since the color change rule cannot begin. \qed
\end{proof}

\section{NP-completeness}
\label{sec:NP}

Given a graph $G$ and positive integers $p$ and $q$, the optimization version of the $(p,q)$-spreading problem is to find a $(p,q)$-spreading set of minimum size in $G$.  The decision version of the problem can be formulated as follows:

\begin{center}
\vskip -.5cm
\fbox{
	\parbox{\textwidth}{
		\textsc{$(p,q)$-Spreading} \\
		\textit{Input}: A graph $G = (V,E)$ and positive integers $p,q$ and $k$
		\\
		\textit{Question}: Is there a $(p,q)$-spreading set in $G$ of size at most $k$?}}
\end{center}

A special case of $(p,q)$-spreading problem is obtained when we set $p=1$, in which case $(p,q)$-spreading, which is then $(1,q)$-spreading, coincides with $q$-forcing. For some earlier studies of $q$-forcing see~\cite{acdp,fhky,ms-2021}. The corresponding graph invariant, the {\em $q$-forcing number} of a graph $G$, is denoted by $F_q(G)$ and equals $\senaq(G)$.
 We will deal with this version of the problem separately, and formulate the corresponding decision problem as follows:

\begin{center}
\vskip -.5cm
\fbox{
	\parbox{\textwidth}{
		\textsc{$q$-Forcing} \\
		\textit{Input}: A graph $G = (V,E)$ and positive integers $q$ and $k$
		\\
		\textit{Question}: Is there a $q$-forcing set in $G$ of size at most $k$?}}
\end{center}

It is clear that both problems are in NP, and it suffices to argue this for the \textsc{$(p,q)$-Spreading} problem. Let $G$ be a graph, and $p,q$ be positive integers. If a set $S$ of size $k$ is presented to us, then we can clearly check in polynomial time whether $S$ is a $(p,q)$-spreading set in $G$.

First, we prove NP-completeness of the \textsc{$q$-Forcing} problem. We will use a translation of the well-known \textsc{zero forcing} problem, which was proven to be NP-complete in~\cite{aa}. Note that the \textsc{zero forcing} problem is simply the \textsc{$q$-forcing} problem for $q=1$. The standard notation for the zero forcing number of a graph $G$ is $Z(G)$. 

We need the following definition. Let $G=(V,E)$ be a graph and $S$ a $q$-forcing set of $G$. Then by definition, before the $q$-forcing process starts vertices of $S$ are blue, vertices of $V(G)-S$ are white and the $q$-forcing procedure recolors all vertices of $V(G)-S$ to blue. In particular, in the $i$th step of this procedure, by the $q$-forcing rule, there exists a blue vertex $a_i \in V(G)$ whose white neighbors are vertices $x_1^i,\ldots , x_{q_i}^i$, where $ 1 \leq q_i \leq q$, and $a_i$ forces vertices  $x_1^i,\ldots , x_{q_i}^i$ to become blue. Let us denote the corresponding $q$-{\it forcing sequence} $(a_1,\ldots , a_k)$ by $Z$ and denote by $f_Z$ the function which maps each $a_i \in Z$ to the subset $\{ x_1^i,\ldots , x_{q_i}^i\}$ of $V(G)$.

Let $G=(V,E)$ be a graph with $V(G)=\{v_1,\ldots , v_n\}$. We define the graph $G^*$ constructed from $G$ as follows. For any vertex $v_i \in V(G)$ add vertices $a_1^i,\ldots, a_{q-1}^i$, $b_1^i,\ldots ,b_q^i, c_1^i\ldots ,c_q^i$ and edges $v_ia_j^i$ for any $j \in [q-1]$ and all possible edges between vertices in $A^i \cup B^i$ and all possible edges between vertices in $B^i \cup C^i$, where  $A^i=\{ a_1^i,\ldots, a_{q-1}^i\}, B^i=\{ b_1^i,\ldots ,b_q^i\}, C^i=\{ c_1^i\ldots ,c_q^i\}$. Hence the subgraph of $G^*$ induced by $A^i \cup B^i$ is isomorphic to the complete graph $K_{2q-1}$ and the subgraph of $G^*$ induced by $B^i \cup C^i$ is isomorphic to $K_{2q}$. Denote by $H_i$ the subgraph of $G^*$ induced by $A^i \cup B^i \cup C^i$.

\begin{lema}\label{Gstar}
If $G$ is a graph and $q\ge 2$ an integer, then $F_q(G^*)=Z(G)$.
\end{lema}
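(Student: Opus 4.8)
The plan is to prove the two inequalities $F_q(G^*)\le Z(G)$ and $F_q(G^*)\ge Z(G)$ separately, using throughout the fact that the gadget $H_i$ is attached to the rest of $G^*$ only at the vertex $v_i$; I write $W_i=\{v_i\}\cup A^i\cup B^i\cup C^i$. For $F_q(G^*)\le Z(G)$ I would take a minimum zero forcing set $S$ of $G$ and show that $S$, viewed as a subset of $V(G^*)$, is a $q$-forcing set of $G^*$. The process is run in phases: first one simulates the zero forcing process of $G$ inside $G^*$, where a step in which $v_i$ forces its unique white neighbour $v_j$ in $G$ is mimicked by $v_i$ performing a $q$-force in $G^*$; at that moment the white neighbours of $v_i$ are exactly $\{v_j\}\cup A^i$ (the set $A^i$ is still all white because $v_i$ has not forced yet) and $|\{v_j\}\cup A^i|=q$, so the $q$-force is legal and additionally turns $A^i$ blue. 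Once all of $V(G)$ is blue, each $v_i$ whose $A^i$ is still white forces $A^i$ (now it has at most $q-1$ white neighbours), then any vertex of $A^i$ forces $B^i$, then any vertex of $B^i$ forces $C^i$, so all of $G^*$ becomes blue. The only thing to check is the bookkeeping that every $v_i$ uses its single force at the right moment, which is routine.

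For $F_q(G^*)\ge Z(G)$ I would first perform a normalisation. Starting from a minimum $q$-forcing set $S^*$ of $G^*$, as long as $S^*\cap(W_i\setminus\{v_i\})\ne\emptyset$ for some $i$, replace $S^*\cap W_i$ by $\{v_i\}$. This never increases $|S^*|$ (because $|S^*\cap W_i|\ge 1$), and I claim it keeps $S^*$ a $q$-forcing set: since $W_i$ communicates with $V(G^*)\setminus W_i$ only through $v_i$, making $v_i$ blue from the start rather than at a later moment cannot slow the process down outside $W_i$, while inside $W_i$, once the $G$-neighbours of $v_i$ are blue, $v_i$ forces $A^i$, then $A^i$ forces $B^i$, then $B^i$ forces $C^i$. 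After finitely many replacements we obtain a $q$-forcing set $S^*\subseteq V(G)$ with $|S^*|\le F_q(G^*)$.

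It then remains to show that such an $S^*\subseteq V(G)$ is a zero forcing set of $G$, which gives $Z(G)\le|S^*|\le F_q(G^*)$ and completes the proof. The key gadget fact is that in the $G^*$-process starting from $S^*$, whenever a vertex $v_j\in V(G)$ performs a force, the set $A^j$ is still entirely white at that moment. Indeed, $S^*\cap H_j=\emptyset$, so $H_j$ starts all white, and the first vertex of $H_j$ to become blue must be forced by a blue neighbour lying in $W_j$; the only candidate is $v_j$, so $A^j$ can only be coloured by $v_j$'s own (unique) force. Hence when $v_j$ forces it still has its $q-1$ white neighbours in $A^j$, so at most one white neighbour in $V(G)$; consequently every force of a $G$-vertex in the $G^*$-process colours at most one vertex of $G$ and is a legal zero forcing step of $G$. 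Since the $G^*$-process colours all of $V(G^*)\supseteq V(G)$, these steps colour all of $G$, so $S^*$ is a zero forcing set of $G$.

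The main obstacle is the normalisation claim: proving rigorously that replacing $S^*\cap W_i$ by $\{v_i\}$ preserves the $q$-forcing property. This needs a monotonicity argument that isolates the interaction of $W_i$ with the rest of $G^*$ through the single cut vertex $v_i$ (so that forcing $v_i$ blue earlier can only help, and $W_i$ can always be filled once $v_i$ and its $G$-neighbours are blue); the remaining ingredients are direct inspections of the degrees inside $H_i$.
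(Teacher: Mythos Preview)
Your argument for $F_q(G^*)\le Z(G)$ is fine and matches the paper's. The gap is in the normalisation step for the reverse inequality: replacing $S^*\cap W_i$ by $\{v_i\}$ need not preserve the $q$-forcing property.

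Take $G=K_{1,3}$ with centre $v_0$ and leaves $v_1,v_2,v_3$, and let $q=2$. Then $A^i=\{a_1^i\}$ for each $i$. The set $S^*=\{a_1^0,v_1\}$ is a minimum $2$-forcing set of $G^*$: $v_1$ forces $\{v_0,a_1^1\}$; then $v_0$, whose white neighbours are now exactly $\{v_2,v_3\}$ (since $a_1^0$ is already blue), forces $\{v_2,v_3\}$; the gadgets then fill. Your normalisation turns $S^*$ into $\{v_0,v_1\}$. But this is \emph{not} a $2$-forcing set: $v_1$ can only force $a_1^1$ (and then $H_1$ fills), while $v_0$ is permanently stuck with the three white neighbours $v_2,v_3,a_1^0$. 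The point is that having $a_1^0$ blue is strictly more useful to the process outside $W_0$ than having $v_0$ blue, because a blue $a_1^0$ reduces the number of white neighbours of $v_0$ and lets $v_0$ $q$-force \emph{two} $G$-vertices at once. Your monotonicity heuristic (``making $v_i$ blue earlier can only help'') ignores that the replacement simultaneously removes blue vertices from $A^i$, and those removed vertices were doing real work through the cut vertex $v_i$.

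The paper avoids this trap by \emph{not} pushing $S^*$ all the way into $V(G)$. Instead it picks, among all minimum $q$-forcing sets, one minimising $\sum_{s\in S^*}d_{G^*}(s,G)$, proves $|S^*\cap H_i|\le q-1$ and $S^*\cap(B^i\cup C^i)=\emptyset$, and then argues from the forcing sequence that whenever some $v_i$ forces, all of $A^i$ is still white; if not, one could shift a vertex of $S^*\cap A^i$ to a white $G$-neighbour of $v_i$, contradicting the tiebreaker. Only then does $S^*\cap V(G)$ fall out as a zero forcing set. Your post-normalisation paragraph (showing $A^j$ is white when $v_j$ forces, provided $S^*\subseteq V(G)$) is correct, so if you can repair the normalisation---for instance by replacing a vertex of $S^*\cap H_i$ not by $v_i$ but by a suitable $G$-neighbour of $v_i$, which is essentially what the paper's distance tiebreaker achieves---the rest of your plan goes through.
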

\proof
First, we will show that if $S$ is a zero forcing set of $G$, then $S$ is also a $q$-forcing set of $G^*$. If a blue vertex in $V(G)$ has at most one white neighbor in $G$, then it has at most $q$ white neighbors in $G^*$. Therefore, if a white vertex in $G$ becomes blue according to zero-forcing rule in $G$, then it becomes blue according to $q$-forcing rule in $G^*$ as well. It follows that $S$ forces all vertices in $V(G)$ to become blue according to $q$-forcing rule in $G^*$. After all vertices in $V(G)$ become blue, each vertex $v_i$ has at exactly $q-1$ white neighbors and so, each $v_i$ forces all vertices in $A^i$ to become blue according to $q$-forcing rule in $G^*$. Now, each vertex in $A_i$ has at most $q$ white neighbors. Therefore, any vertex in $A_i$ forces all vertices in $B_i$ to become blue according to $q$-forcing rule in $G^*$. Similarly, after all vertices in $B_i$ become blue, each vertex in $B_i$ has at most $q$ white neighbors. Therefore, any vertex in $B_i$ forces all vertices in $C_i$ to become blue. This implies that $S$ is a $q$-forcing set of $G^*$ and thus $F_q(G^*)\le Z(G)$.

Now, let $S^*$ be a minimum $q$-forcing set of $G^*$ such that $\sum_{s \in S^*}d_{G^*}(s,G)$ is the smallest possible. Denote $S^*_i=S^* \cap V(H_i)$ and let $Z^*=(x_1,\ldots , x_k)$ be a $q$-forcing sequence of $G^*$.
\begin{claim}\label{c:1}
For any $i \in [n]$: $|S^*_i| \leq q-1$ and $S^* \cap (B^i \cup C^i) = \emptyset $.
\end{claim}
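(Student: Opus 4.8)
The plan is to analyze the structure of a minimum $q$-forcing set $S^*$ of $G^*$ chosen so that $\sum_{s\in S^*}d_{G^*}(s,G)$ is minimized, and show that within each gadget $H_i$ it uses at most $q-1$ vertices, all lying in $A^i$ (equivalently, none in $B^i\cup C^i$). First I would record the key degree facts about the gadget: each $c\in C^i$ has neighborhood exactly $B^i\cup (C^i\setminus\{c\})$, so $\deg_{G^*}(c)=2q-1$; each $b\in B^i$ has neighborhood $A^i\cup C^i\cup(B^i\setminus\{b\})$, so $\deg_{G^*}(b)=3q-2$; and each $a\in A^i$ has neighborhood $\{v_i\}\cup B^i\cup(A^i\setminus\{a\})$, so $\deg_{G^*}(a)=2q-1$. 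Note that $H_i$ attaches to the rest of $G^*$ only through the edges $v_ia_j^i$.

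The main step is an exchange argument. Suppose $S^*$ contains a vertex $w\in B^i\cup C^i$ for some $i$. I would argue that replacing $w$ by the corresponding "same-position" vertex closer to $v_i$ — namely, if $w=c_j^i\in C^i$ replace it by $b_j^i$, and if $w=b_j^i\in B^i$ replace it by (a fresh vertex of) $A^i$, being careful with the index mismatch since $|A^i|=q-1<q=|B^i|$ — yields a set $S'$ of the same size that is still a $q$-forcing set of $G^*$ but has strictly smaller $\sum_{s}d_{G^*}(s,G)$, contradicting minimality. To see $S'$ still forces: any forcing step performed by $w$ inside $H_i$ can be replicated (or made unnecessary) by the replacement vertex, because the relevant induced subgraphs $K_{2q}$ on $B^i\cup C^i$ and $K_{2q-1}$ on $A^i\cup B^i$ are "symmetric enough" that a blue vertex in the layer closer to $v_i$ dominates a blue vertex farther out; meanwhile a step in which $w$ forces vertices outside $H_i$ is impossible since $w$ has no neighbors outside $H_i$. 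The cardinality bound $|S^*_i|\le q-1$ then follows: once we know $S^*_i\subseteq A^i$ and $|A^i|=q-1$, we are done; alternatively, if $|S^*_i|\ge q$ we could delete one vertex of $S^*_i$ and check the forcing chain $v_i\to A^i\to B^i\to C^i$ still completes the gadget (using that the rest of $G^*$ gets blue independently, via the argument in the first half of the lemma's proof together with the $q$-forcing sequence $Z^*$), contradicting minimality of $|S^*|$.

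For the reduction-to-the-gadget-completing argument I would rely on the already-established fact (first paragraph of the proof of Lemma \ref{Gstar}) that once all of $V(G)$ is blue, each $v_i$ has exactly $q-1$ white neighbors and forces $A^i$, then $B^i$, then $C^i$; so it suffices to ensure that $S^*\cap V(G)$ together with $S^*\setminus\bigcup_i V(H_i)=\emptyset$ still forces all of $V(G)$ — but this may need the observation that blue vertices inside $H_i$ can never help force anything in $V(G)$ except through $v_i$, and $v_i$ itself is forced (if white) only via a vertex of $A^i$ having $\le q$ white neighbors, which requires all of $A^i\cup B^i$ to be blue, which in turn traces back into the gadget.

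The part I expect to be the main obstacle is the index bookkeeping in the exchange step when moving a vertex from $B^i$ (which has $q$ vertices) into $A^i$ (which has only $q-1$ vertices): one must show that after such moves the total used in $A^i$ never exceeds $q-1$, which is really the same content as the cardinality bound, so the two halves of the claim have to be proved together rather than sequentially. Handling this cleanly — probably by first showing $S^*\cap(B^i\cup C^i)=\emptyset$ for all $i$ by pushing everything into $A^i$ and simultaneously noting that if this would force $|S^*_i|>|A^i|=q-1$ then some vertex of $S^*_i$ is redundant and can simply be deleted (again contradicting minimality) — is the delicate point; everything else is routine verification that complete-graph layers pass forcing along.
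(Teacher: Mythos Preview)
Your exchange-argument instinct is right, but there is a real gap, and it sits exactly where you flagged the ``delicate point.''  You try to establish $S^*\cap(B^i\cup C^i)=\emptyset$ first by pushing vertices of $S^*_i$ layer by layer into $A^i$, and then read off $|S^*_i|\le |A^i|=q-1$.  When the pushing would overflow $A^i$ you propose to ``delete one vertex of $S^*_i$ and check the forcing chain $v_i\to A^i\to B^i\to C^i$ still completes the gadget, using that the rest of $G^*$ gets blue independently via the first half of the lemma.''  That appeal is circular: the first half of Lemma~\ref{Gstar} takes as input a zero-forcing set of $G$, but at this stage you do not know that $S^*\cap V(G)$ is one --- that is precisely what the remainder of the lemma is trying to establish.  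Nor is it clear which vertex of $S^*_i$ is safely deletable; removing a vertex can genuinely break the forcing sequence $Z^*$, since you no longer have the monotonicity ``new blue set $\supseteq$ old blue set'' that makes replacement arguments painless.

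The paper avoids the overflow entirely by reversing the order: prove the cardinality bound \emph{first}.  The key move you are missing is that if $|S^*_i|\ge q$, one replaces \emph{all} of $S^*_i$ by the $q$-element set $A^i\cup\{v_i\}$ (not merely a subset of $A^i$).  With $v_i$ and all of $A^i$ blue, any $a\in A^i$ has exactly $q$ white neighbours (namely $B^i$) and forces $B^i$; then any $b\in B^i$ has exactly $q$ white neighbours ($C^i$) and forces $C^i$.  Thus the entire gadget $H_i$ is blue after two steps, and since the new initial blue set contains $S^*\setminus S^*_i$ together with all of $H_i\cup\{v_i\}\supseteq S^*_i$, it contains $S^*$; hence every step of the original sequence $Z^*$ remains valid and the new set is $q$-forcing.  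Because $|S^*_i|\ge q>|A^i|$ forces at least one vertex of $S^*_i$ into $B^i\cup C^i$ (distance $\ge 2$), the replacement has size $\le |S^*|$ and strictly smaller distance sum --- contradiction.  Once $|S^*_i|\le q-1$ is in hand, your pushing step is safe: there is room in $A^i$ to absorb every vertex of $S^*_i\cap(B^i\cup C^i)$, so a single wholesale exchange (rather than one-at-a-time) gives a $q$-forcing set of the same size with smaller distance sum unless $S^*_i\cap(B^i\cup C^i)=\emptyset$.
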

\begin{claimproof}
For the purpose of contradiction assume that $|S^*_i| \geq q$. Then $S'=(S^*-S^*_i) \cup A^i \cup \{v_i\}$ is a $q$-forcing set of $G^*$ with $\sum_{s \in S^*}d_{G^*}(s,G) > \sum_{s \in S'}d_{G^*}(s,G)$, a contradiction.

Since $|S^*_i| \leq q-1$ the set $S'$, obtained from $S^*$ by replacing each vertex in $B^i \cup C^i$ by one vertex in $A^i$, is also a $q$-forcing set of $G^*$. If $(B^i\cup C^i)\cap S_i^*\neq \emptyset$, then $S'$ has a smaller sum of distances to $G$ than $S^*$ which contradicts with the definition of $S^*$. Thus, $(B^i\cup C^i)\cap S_i^*=\emptyset$.

\end{claimproof}

By Claim~\ref{c:1}, $S^* \subseteq V(G)\cup A^1 \cup \ldots \cup A^n$.

\begin{claim}\label{c:2}
If $j$ is the smallest index such that $x_j \in V(H_i)$ for some $i \in [n]$, then $x_j \in A^i$ and $v_i \notin f_{Z^*} (x_j)$.
\end{claim}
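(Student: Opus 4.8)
The plan is to track which vertices can be blue just before step~$j$ of the forcing sequence $Z^*$ and then to count the white neighbours of $x_j$ at that moment. First I would note that, by the minimality of $j$, every forcer $x_\ell$ with $\ell<j$ lies in $V(G)$: indeed $V(G^*)$ is the disjoint union of $V(G)$ and the sets $V(H_1),\dots,V(H_n)$, and $x_\ell\notin V(H_{i'})$ for all $i'$ when $\ell<j$. Combining this with Claim~\ref{c:1}, which gives $S^*\subseteq V(G)\cup A^1\cup\cdots\cup A^n$, and with the identity $N_{G^*}(v_{i'})=N_G(v_{i'})\cup A^{i'}$ for every $v_{i'}\in V(G)$, an induction on $\ell$ shows that the set $W$ of vertices that are blue after the first $j-1$ steps satisfies $W\subseteq V(G)\cup A^1\cup\cdots\cup A^n$. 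The structural point making the induction go through is that no vertex of $B^{i'}\cup C^{i'}$ has a neighbour in $V(G)$, so such a vertex can only be recoloured by a forcer lying in $V(H_{i'})$, and there are none before step~$j$.

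Given this, since $x_j$ is blue just before step~$j$ and $x_j\in V(H_i)=A^i\cup B^i\cup C^i$, the inclusion $W\subseteq V(G)\cup A^1\cup\cdots\cup A^n$ together with disjointness of these sets forces $x_j\in A^i$, which is the first assertion. For the second one I would examine $N_{G^*}(x_j)=\{v_i\}\cup(A^i\setminus\{x_j\})\cup B^i$. Because $B^i\cap W=\emptyset$, the vertex $x_j$ has at least $|B^i|=q$ white neighbours just before step~$j$; but $x_j$ does force at step~$j$, so by the $q$-forcing rule it has at most $q$ white neighbours there. Hence it has exactly $q$, which is possible only if $v_i\in W$ and $A^i\setminus\{x_j\}\subseteq W$. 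In particular $v_i$ is already blue just before step~$j$, so $v_i\notin f_{Z^*}(x_j)$, as claimed.

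I expect the only delicate part to be the inductive claim $W\subseteq V(G)\cup A^1\cup\cdots\cup A^n$ (more precisely, that no $B^{i'}$ or $C^{i'}$ vertex turns blue before step~$j$); the remainder is bookkeeping inside the gadget $H_i$ plus the one-line count against the $q$-forcing threshold.
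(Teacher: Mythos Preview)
Your argument follows essentially the paper's approach: first establish that $B^i\cup C^i$ are still white just before step~$j$ (hence the blue vertex $x_j\in V(H_i)$ must lie in $A^i$), then count neighbours of $x_j$ against the $q$-forcing threshold to conclude $v_i$ is already blue. The paper phrases the second half as a contradiction (supposing $v_i$ white forces at least $q$ blue vertices in $A^i\cup B^i$, exceeding $|S_i^*|\le q-1$); your direct count is a harmless variant of the same idea.

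One caveat on the reading of the hypothesis. You interpret ``$j$ is the smallest index such that $x_j\in V(H_i)$ for some $i$'' globally, and from this deduce that every earlier forcer lies in $V(G)$, whence your inclusion $W\subseteq V(G)\cup\bigcup_{i'}A^{i'}$. The paper's proof and its subsequent application of the claim treat $i$ as fixed: $j$ is the first index with $x_j\in V(H_i)$ for that particular $i$, and earlier forcers may well lie in some other $H_{i'}$. Under this (stronger) reading your assertion that $x_\ell\in V(G)$ for all $\ell<j$ fails. Fortunately, all you actually use is $W\cap(B^i\cup C^i)=\emptyset$, and your own structural remark already delivers this locally: vertices of $B^i\cup C^i$ have no neighbours outside $H_i$, so they can only be recoloured by a forcer in $H_i$, and by minimality of $j$ there are none before step~$j$. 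So simply drop the global induction and argue for $H_i$ alone; the rest of your proof goes through unchanged and matches the paper.
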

\begin{claimproof}
By Claim~\ref{c:1}, $S^*_i \cap (B^i \cup C^i)= \emptyset$, hence vertices in $B^i$ can become blue just if all vertices from $A^i$ are already blue, which implies that a vertex from $A^i$ will force vertices in $B^i$ and then vertex in $B^i$ will force vertices in $C^i$ to become blue. Hence $x_j \in A^i$, i.e.\ $x_j=a_m^i$ for some $m\in [q-1].$

Suppose that at this step of $q$-forcing procedure $v_i$ is one of at most $q$ white neighbors of $x_j=a_m^i$. Hence at least $q$ vertices of $A^i \cup B^i$ are blue at this step. Since by Claim~\ref{c:1}, $|S^*_i| \leq q-1$, we get a contradiction with the fact that $a_m^i$ is the first vertex of $Z^*$ that is also in $H^i$. 
\end{claimproof}

Claim~\ref{c:2} implies that $v_i$ is already blue when the $q$-forcing rule is performed by $x_j$.
By using $Z^*$ we will construct a zero forcing sequence $Z$ in $G$. In particular, we will show that $Z$ can be obtained from $Z^*$ by removing from $Z^*$ only the vertices from $G^*$ that are not in $G$.

Suppose that we are in step $j$ of the $q$-forcing procedure in $G^*$, where blue vertex $x_j$ in $Z^*$ will force its white neighbors  to become blue ($x_j$ has at most $q$ white neighbors). We will prove that if $x_j \in V(G)$, then it has at most one white neighbor in $V(G)$, which will yield that $Z$ is a zero-forcing sequence. 

Thus let $x_j \in V(G)$. Hence there exists $i \in [n]$ such that $x_j=v_i$. Let $A$ be the set of white neighbors of $v_i$ in $H_i$ after the first $j-1$ steps of the $q$-forcing procedure. Suppose to the contrary that $A \neq A^i$, i.e.\ at least one vertex of $A^i$ is blue. Vertices of $H_i$ that are blue after $j-1$ steps of the $q$-forcing procedure are either from $S^*$ or they were colored blue in one of these steps. If the latter holds, then by Claim~\ref{c:2}, there exists a vertex $a_t^i$ from $A^i$ which is in $Z^*$ before $x_j$.  By Claims~\ref{c:1} and \ref{c:2} we infer $S^*_i=A^i$. Hence all vertices of $A^i$ are blue before step $j$ of the $q$-forcing procedure. Since by Claim~\ref{c:2}  $v_i \notin f_{Z^*}(a_t^i)$, the set $S'$ obtained from $S^*$ by replacing a vertex from $A^i$ by one of $v_i$'s white neighbors in $G$ (which exists because $v_i=x_j \in Z^*$ and has no white neighbors in $H_i$), is also a $q$-forcing set of $G^*$. To see that $S'$ is a $q$-forcing set note that all the $q$-forcing rules in the first $j-1$ steps are preformed in the same way as with $S^*$, except possibly within $H_i$. Anyway, after $v_i$ forces its neighbors to become white, we can force the whole $H_i$ to become blue after at most two additional steps. The remaining steps, after step $j$, follow exactly the same procedure as with $S^*$.
Note that $S'$ has smaller sum of distances to $G$ than $S^*$, a contradiction.  Thus vertices of $H_i$ that are blue after the first $j-1$ steps of the $q$-forcing procedure are from $S^*$. In the same way as above, the set $S'$ obtained from $S^*$ by removing from $S^*$ a blue vertex from $A^i$ and adding to $S'$ one of $v_i$'s white neighbors in $G$ (if it exists), is also a $q$-forcing set of $G^*$. Note that it is possible that $v_i$ has no white neighbors in $G$ in which case $S'$ is smaller than $S^*$, a contradiction. Otherwise $S'$ has smaller sum of distances to $G$ than $S^*$, again a contradiction. Hence $A=A^i$, i.e.\ all vertices of $A^i$ are white before step $j$ of the $q$-forcing procedure. Hence $x_j=v_i$ has at most one white neighbor in $G$.

Hence the sequence $Z$ obtained from $Z^*$  by removing from $Z^*$ only the vertices from $G^*$ that are not in $G$, is a zero forcing sequence.
Thus $S^* \cap V(G)$ is a zero forcing set which implies that $Z(G) \leq F_q(G^*)$.
\qed

\begin{thm}
The \textsc{$q$-Forcing} problem  is NP-complete.
\end{thm}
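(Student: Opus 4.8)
The plan is to give a polynomial-time many-one reduction from the \textsc{zero forcing} problem, which is NP-complete by~\cite{aa}; since it has already been observed that \textsc{$q$-Forcing} lies in NP, this suffices. It is cleanest to read the statement as asserting NP-completeness for each fixed integer $q\ge 1$ — the form that will be reused for \textsc{$(p,q)$-Spreading} — and to split into the cases $q=1$ and $q\ge 2$. The case $q=1$ needs no work, since \textsc{$1$-Forcing} is literally \textsc{zero forcing}. (If one instead prefers the version in which $q$ is part of the input, the $q=1$ slice already witnesses NP-hardness.)

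For $q\ge 2$ I would take the construction $G\mapsto G^*$ described above as the reduction: given an instance $(G,k)$ of \textsc{zero forcing} (``is $Z(G)\le k$?''), output the instance $(G^*,q,k)$ of \textsc{$q$-Forcing}. First one checks this is a polynomial-time reduction — building $G^*$ only adds, for each $v_i\in V(G)$, the $3q-1$ vertices of $A^i\cup B^i\cup C^i$, the edges of the two cliques on $A^i\cup B^i$ and on $B^i\cup C^i$, and the $q-1$ edges $v_ia_j^i$, so $|V(G^*)|=3q\,|V(G)|$ and $|E(G^*)|=|E(G)|+O(q^2)\,|V(G)|$, which for fixed $q$ is linear in the size of $G$, and $G^*$ is produced from $G$ in linear time. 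Then Lemma~\ref{Gstar} gives $F_q(G^*)=Z(G)$, hence $(G,k)$ is a yes-instance of \textsc{zero forcing} if and only if $(G^*,q,k)$ is a yes-instance of \textsc{$q$-Forcing}. Therefore \textsc{$q$-Forcing} is NP-hard, and together with membership in NP it is NP-complete.

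I expect the only real obstacle in this whole argument to be Lemma~\ref{Gstar} — and that is already in hand. It is where one must verify that the gadget $H_i$ can only be turned blue in the order ``$A^i$, then $B^i$, then $C^i$'', that a zero forcing set of $G$ lifts to a $q$-forcing set of $G^*$, and, conversely, that a minimum $q$-forcing set of $G^*$ chosen to minimize $\sum_{s}d_{G^*}(s,G)$ lies in $V(G)\cup A^1\cup\cdots\cup A^n$ and forces each $v_i$ before any vertex of $H_i$ acts, so that its trace on $V(G)$ is a zero forcing set of $G$. Granting that lemma, what remains for the theorem is only the routine packaging above: choosing the reduction, confirming it runs in polynomial time, and invoking NP-membership.
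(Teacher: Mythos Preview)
Your proposal is correct and follows exactly the route the paper takes: membership in NP has been noted, the case $q=1$ is literally \textsc{zero forcing}, and for $q\ge 2$ one reduces from \textsc{zero forcing} via the polynomial-time construction $G\mapsto G^*$ together with Lemma~\ref{Gstar}; your extra remarks on the size of $G^*$ and on the version with $q$ in the input are accurate refinements of what the paper leaves implicit.
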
 

In other words, the \textsc{$(1,q)$-Spreading} problem is NP-complete. We will now translate this problem to the general \textsc{$(p,q)$-Spreading} problem, and show its NP-completess.
 
\begin{lema}\label{Gprime}
Let $G$ be a graph and let $p,q\in \mathbb{N}$ with $p\ge 2$. 
If $G'$ is obtained from $G$ by adding vertices $u_1,\ldots, u_{p-1}$, attaching $p$ leaves to each vertex $u_i$, and connecting each $u_i$ to every vertex of $G$, then $\spq(G')=F_q(G)+p(p-1)$.  
\end{lema}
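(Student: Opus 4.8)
The plan is to establish the two inequalities $\spq(G')\le F_q(G)+p(p-1)$ and $\spq(G')\ge F_q(G)+p(p-1)$ separately. Throughout, write $U=\{u_1,\dots,u_{p-1}\}$ and let $L$ be the set of all $p(p-1)$ leaves (the $p$ leaves attached to each $u_i$). Since every leaf has degree $1<p$ in $G'$, Remark~\ref{r:basic}(1) forces $L\subseteq S$ for every $(p,q)$-spreading set $S$ of $G'$. Two structural facts drive both directions: every vertex of $V(G)$ is adjacent to all of $U$, so once $U$ is blue a white vertex $w\in V(G)$ has at least $p$ blue neighbours if and only if it has a blue neighbour inside $G$; and for any colouring in which $U$ is blue, a vertex $a\in V(G)$ has exactly the same white neighbours in $G'$ as it has in $G$.

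For the upper bound I would take a minimum $q$-forcing set $F$ of $G$ and set $S=F\cup L$, so that $|S|=F_q(G)+p(p-1)$, and then verify that $S$ is $(p,q)$-spreading. First, each $u_i$ turns blue immediately: it has its $p$ (blue) leaf-neighbours, and any such leaf has the single white neighbour $u_i$, hence at most $q$ white neighbours. With $U\cup L$ blue, I would replay the $q$-forcing process of $G$ from $F$ inside $G'$: whenever a blue $a\in V(G)$ forces its white $G$-neighbours $w_1,\dots,w_t$ with $1\le t\le q$, in $G'$ each $w_j$ has the $p-1$ vertices of $U$ together with $a$ as blue neighbours (at least $p$ of them), while $a$ has exactly $w_1,\dots,w_t$ as white neighbours in $G'$ (at most $q$), so the spreading rule with witness $a$ colours each $w_j$. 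Inductively all of $V(G)$ becomes blue, giving $\spq(G')\le F_q(G)+p(p-1)$.

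For the lower bound, let $S$ be a minimum $(p,q)$-spreading set of $G'$. I would first show $S\cap U=\emptyset$: if $u_i\in S$, then the $p$ leaves of $u_i$ still lie in $S\setminus\{u_i\}$ and colour $u_i$ on their own, so the spreading process from $S\setminus\{u_i\}$ reaches a blue set containing $S$ and, by monotonicity of spreading, then colours all of $G'$ — contradicting minimality. Hence $S=F\cup L$ with $F=S\cap V(G)$, and it remains to prove $F$ is a $q$-forcing set of $G$. Suppose not, and let $W\ne\emptyset$ be the set of vertices still white after $q$-forcing is run to completion in $G$ from $F$; by definition, in that final colouring of $G$ no blue vertex has between $1$ and $q$ neighbours in $W$. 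Running spreading in $G'$ from $S$ colours all of $V(G)$, so some vertex of $W$ gets coloured; let $w$ be the first one. Just before $w$ is coloured, every blue vertex of $V(G)$ lies outside $W$, hence is blue in the final $q$-forcing colouring of $G$; and since $w$ acquires at least $p$ blue neighbours, it has a blue neighbour $a'\in N_G(w)$. If $|W|\le q$, then in the final colouring of $G$ the blue vertex $a'$ has white-neighbour set $N_G(a')\cap W$, which contains $w$ and has size at most $q$, so $a'$ could perform a $q$-forcing step — contradicting finality. If $|W|>q$, then the witness $a$ used to colour $w$ in $G'$ cannot be any $u_i$, since at that moment $u_i$ has all of $W$ among its white neighbours; thus $a\in N_G(w)\subseteq V(G)$, and because $a$ has at most $q$ white neighbours in $G'$ and $N_G(a)\cap W$ (which contains $w$) is among them, $a$ could force $w$ in the final colouring of $G$ — again a contradiction. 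Hence $W=\emptyset$, $F$ is a $q$-forcing set of $G$, and $\spq(G')=|F|+p(p-1)\ge F_q(G)+p(p-1)$, which together with the upper bound completes the proof.

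I expect the main obstacle to be the last case of the lower bound: a vertex of $W$ being coloured in $G'$ using one of the auxiliary vertices $u_i$ as the spreading witness, because $u_i$ has no counterpart in $G$ and such a step cannot be translated directly into a $q$-forcing move. The resolution is the dichotomy on $|W|$: a $u_i$-witness is only legal when at most $q$ vertices of $V(G)$ are white, forcing $|W|\le q$, and then the ``at least $p$ blue neighbours'' requirement supplies an honest $G$-neighbour $a'$ of $w$ that does the forcing inside $G$. The reduction $S\cap U=\emptyset$ (so that $F=S\cap V(G)$) together with the universal-to-$V(G)$ behaviour of the $u_i$'s are what make both directions go through.
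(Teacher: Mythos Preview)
Your proof is correct and follows essentially the same approach as the paper: both directions set $S'=F\cup L$ (upper bound) or $S=S'\cap V(G)$ after observing $L\subseteq S'$ and $U\cap S'=\emptyset$ (lower bound), and then argue that the spreading process in $G'$ mirrors $q$-forcing in $G$ once $U$ is blue. Your lower bound is in fact more careful than the paper's, which asserts that the spreading rule ``is exactly the $q$-forcing rule'' without explicitly treating the case where a $u_i$ serves as witness; your $|W|\le q$ versus $|W|>q$ dichotomy fills in precisely that detail.
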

\proof Let $S$ be a minimum $q$-forcing set of $G$. Set $U=\{u_1,\ldots,u_{p-1}\}$, and let $L$ be the set of all leaves in $G'$. (Note that $|L|=p(p-1)$.) Now, let $S'=S\cup L$ be the set of vertices that are initially set blue within the $(p,q)$-spreading process in $G'$. Note that the leaves in $L$ make their neighbors in $U$ blue, because each $u_i$ has $p$ blue leaf-neighbors. Hence, each vertex of $V(G)$ already has at least $p-1$ blue neighbors, and so the condition of $(p,q)$-spreading that each white vertex must have at least $p$ neighbors is fulfilled as soon as a white neighbor in $V(G)$ has a blue neighbor also in $V(G)$. Thus, the process of changing colors with respect to the $(p,q)$-spreading rules takes the same steps as the color change process with respect to $q$-forcing in $G$. Hence, $S'$ is a $(p,q)$-spreading set in $G'$, and we readily infer that $\spq(G')\le F_q(G)+p(p-1)$.

For the reversed inequality, let $S'$ be a minimum $(p,q)$-spreading set of $G'$. Note that necessarily $L\subset S'$, for otherwise none of the leaves would ever become blue, because they each have at most one neighbor and $p>1$. In addition, $U\cap S'=\emptyset$, because each $u_i$ becomes blue due to its $p$ leaf neighbors. Let $S=S'\cap V(G)$. By the same reasoning as in the previous paragraph, we infer that $S$ is a $q$-forcing set of $G$. Indeed, the condition for a white vertex having at least $p$ neighbors if fulfilled as soon as it has a blue neighbor in $G$. Therefore only the condition of a blue neighbor having at most $q$ white neighbors has to be considered, which is exactly the $q$-forcing rule. Hence, $S$ is a $q$-forcing set of $G$, and so $F_q(G)\le\spq(G')-p(p-1)$.
\qed

Since the construction of $G'$ from $G$ can be done in polynomial time, we infer, by using also Lemma~\ref{Gprime}, that a polynomial solution of the \textsc{$(p,q)$-Spreading} problem in $G'$ yields a polynomial solution of the  \textsc{$q$-Forcing} problem in $G$. We derive the following result.

\begin{thm}
The \textsc{$(p,q)$-Spreading} problem is NP-complete.
\end{thm}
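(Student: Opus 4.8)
The plan is to derive NP-completeness directly from the two preceding results. Membership in NP has already been observed: given a candidate set $S$ with $|S|\le k$, one simulates the spreading colour change rule in polynomial time and checks whether every vertex of $G$ eventually becomes blue. So everything reduces to exhibiting a polynomial-time reduction witnessing NP-hardness, and I would organize this by splitting on the value of $p$.

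If $p=1$, there is nothing to prove: as already noted, \textsc{$(1,q)$-Spreading} is literally the same problem as \textsc{$q$-Forcing}, whose NP-completeness was just established. For $p\ge 2$ I would reduce from \textsc{$q$-Forcing}. Given an instance consisting of a graph $G$ and positive integers $q$ and $k$, apply the construction of Lemma~\ref{Gprime}: add vertices $u_1,\dots,u_{p-1}$, attach $p$ leaves to each $u_i$, and join each $u_i$ to every vertex of $G$, obtaining $G'$; output the \textsc{$(p,q)$-Spreading} instance $(G',p,q,k+p(p-1))$. The graph $G'$ has only $p-1+p(p-1)$ additional vertices and $p(p-1)+(p-1)|V(G)|$ additional edges, so (with $p$ and $q$ regarded as fixed, as in the phrase ``for an arbitrary choice of $p$ and $q$'') the construction is polynomial. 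By Lemma~\ref{Gprime}, $\spq(G')=F_q(G)+p(p-1)$; hence $G'$ has a $(p,q)$-spreading set of size at most $k+p(p-1)$ if and only if $G$ has a $q$-forcing set of size at most $k$, so the reduction maps yes-instances to yes-instances and no-instances to no-instances.

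The substance of the result lies entirely in the earlier lemmas rather than here: Lemma~\ref{Gstar}, which transfers zero forcing to $q$-forcing via the complete-graph gadgets $H_i$ together with the exchange argument based on the sum-of-distances potential $\sum_{s\in S^*}d_{G^*}(s,G)$, and Lemma~\ref{Gprime}, which pads $G$ so that the ``at least $p$ blue neighbours'' clause of the spreading rule becomes automatically satisfied once any vertex of $G$ turns blue, thereby collapsing $(p,q)$-spreading on $G'$ to $q$-forcing on $G$. Assuming those, the present theorem is a routine assembly step. The only points that need a moment's attention are the boundary case $p=1$, which I dispatch by appealing to the definition, and the fact that the padding gadget has size depending on $p$, which is why the claim is naturally read as holding for each fixed pair $(p,q)$.
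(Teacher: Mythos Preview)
Your proposal is correct and matches the paper's own argument: the paper likewise derives the theorem directly from Lemma~\ref{Gprime} (noting the construction of $G'$ is polynomial) together with the NP-completeness of \textsc{$q$-Forcing}, and treats NP membership as already observed. You are simply more explicit about the $p=1$ boundary case and the instance-mapping details than the paper's one-sentence derivation.
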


\section{Trees}
\label{sec:trees}

In this section, we first consider algorithmic issues concerning $(p,q)$-spreading in trees. In particular, for all feasible values of $p$ and $q$ we present efficient algorithms to obtain a minimum $(p,q)$-spreading set of an arbitrary tree. Second, we consider upper and lower bounds for $\spq(T)$, where $T$ is a tree, and characterize families of trees that achieve these bounds.

\subsection{Algorithms for $(p,q)$-spreading}

The zero forcing problem on trees has been studied in several papers. It is well known that the zero forcing number $Z(T)$ of a tree $T$ is equal to the size of the smallest partition of the vertex set of the tree into induced paths, which is denoted by $P(T)$;  see~\cite{AIM,FH-2007}. In this section, we generalize this result to the context of $q$-forcing, for which we need the following definition. Let $P_q(T)$ denote the size of the smallest partition of $V(T)$ into subsets inducing trees of maximum degree at most $q+1$. Recall that $F_q(T)=\sigma_{(1,q)}(T)$, and by definition, $P(T)=P_1(T)$ and $Z(T)=F_1(T)$.

\begin{thm}\label{thm:q-forcing drevesa}
Let $T$ be a tree and $q \in \mathbb{N}$. Then $F_q(T)=P_q(T)$. 
\end{thm}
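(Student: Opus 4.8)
The plan is to prove the two inequalities $F_q(T) \le P_q(T)$ and $F_q(T) \ge P_q(T)$ separately, generalizing the classical argument for $Z(T) = P(T)$. For the first inequality, suppose $V(T)$ is partitioned into sets $V_1, \dots, V_k$, each inducing a subtree of maximum degree at most $q+1$, with $k = P_q(T)$. I would pick from each subtree $T[V_i]$ a ``root'' or endpoint vertex $s_i$ in a way compatible with a rooting of $T$ (choosing an arbitrary root $r$ of $T$ and letting $s_i$ be the vertex of $V_i$ closest to $r$), and claim $S = \{s_1, \dots, s_k\}$ is a $q$-forcing set. The forcing proceeds from $r$ outward: within each $T[V_i]$, once $s_i$ is blue we perform a BFS-like spread through $T[V_i]$, where at each step the current blue vertex $v \in V_i$ has its white neighbors contained in $V_i$ together with possibly some vertices in other parts that are ``below'' it; the point is that the parts $V_j$ hanging off $v$ get their roots $s_j$ colored as we go, so the only white neighbors that actually need to be forced by $v$ are its children inside $V_i$, of which there are at most $q$ (since $\deg_{T[V_i]}(v) \le q+1$ and one neighbor is the parent). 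This requires an inductive argument on distance from $r$, and careful bookkeeping that ``already blue'' includes the roots of deeper parts.

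For the reverse inequality $F_q(T) \ge P_q(T)$, let $S$ be a minimum $q$-forcing set with forcing sequence, where vertex $a_i$ forces the set $f(a_i)$. I would build a partition of $V(T)$ of size $|S|$ as follows: think of the forcing process as orienting edges — each forced vertex $x \in f(a_i)$ gets an incoming arc from $a_i$. Each vertex not in $S$ has exactly one in-arc, each vertex in $S$ has zero in-arcs, and the arcs form a forest of $|S|$ out-trees (``forcing trees'') rooted at the elements of $S$. The key claim is that each forcing tree, as a subgraph of $T$, has maximum degree at most $q+1$: a vertex $a$ in a forcing tree has at most $q$ out-neighbors (it forced at most $q$ vertices in a single step — note a vertex forces only once, in the step when it has at most $q$ white neighbors and all of them get colored) plus at most one in-neighbor, giving degree at most $q+1$ within the forcing tree. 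So the vertex sets of the forcing trees form a partition of $V(T)$ into $|S|$ parts, each inducing a subtree (it is connected and acyclic as a subgraph of the tree $T$) of maximum degree at most $q+1$, hence $P_q(T) \le |S| = F_q(T)$.

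The main obstacle I anticipate is the first inequality's forcing argument: one must verify that the chosen set $S$ really forces everything, which hinges on the subtle point that when a blue vertex $v \in V_i$ is about to force, all neighbors of $v$ lying in other parts $V_j$ are already blue. This is true because such a neighbor is either the root $s_j$ of a part $V_j$ hanging below $v$ (blue since it is in $S$), or lies in the part $V_{i'}$ containing the parent side — and one needs the forcing order to be set up (by increasing distance from $r$, processing parts in a topological order of the ``quotient forest'' of parts) so that the parent side is already fully processed. A clean way to handle this is to induct on the number of parts not yet fully colored, or to argue that a part $V_i$ can be completely colored once its root $s_i$ is blue, and then to order the parts so that each part's root becomes blue before we need it. One should also double-check the subtlety that a single vertex in the $q$-forcing process performs at most one force (which is standard: after $a_i$ forces, it has no white neighbors, so it never forces again), which is what bounds the out-degree in the forcing trees; and that distinct forcing trees are vertex-disjoint and cover $V(T)$, which follows since every non-$S$ vertex is forced exactly once.
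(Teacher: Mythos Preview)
Your argument for $F_q(T)\ge P_q(T)$ via forcing trees is correct and is exactly what the paper does: each forcing tree is a connected induced subtree of $T$ in which every vertex has out-degree at most $q$ and in-degree at most $1$, hence degree at most $q+1$.

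The gap is in the direction $F_q(T)\le P_q(T)$. Your claim that a blue vertex $v\in V_i$ has at most $q$ white children in $V_i$ relies on the sentence ``$\deg_{T[V_i]}(v)\le q+1$ and one neighbour is the parent''. This fails precisely when $v=s_i$: by your choice $s_i$ is the topmost vertex of $V_i$, so its parent (if it has one) lies \emph{outside} $V_i$, and all of its $\deg_{T[V_i]}(s_i)$ neighbours in $V_i$ are children. If that degree equals $q+1$, then even after the parent of $s_i$ is blue, $s_i$ still has $q+1$ white neighbours and cannot force. Concretely, take $q=1$ and let $T$ be two adjacent centres $c_1,c_2$, each carrying two pendant paths of length $2$. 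The unique minimum path partition (into two parts) puts $c_1$ and $c_2$ in different parts, each with degree $2$ in its part. Rooting $T$ at any leaf and taking $S=\{s_1,s_2\}$ as you describe always produces one $s_i\in\{c_1,c_2\}$, which then has two white children in its part and the process stalls; thus your $S$ is not a $1$-forcing set, even though $F_1(T)=P_1(T)=2$.

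The paper avoids this by not taking the topmost vertex of each part. Instead it forms the quotient tree $\mathcal{T}$ on the parts, picks a leaf part $T_k$ of $\mathcal{T}$, and selects a \emph{leaf} $x_k$ of $T_k$ (hence a leaf of $T$). Starting from $x_k$, forcing spreads through $T_k$ until the unique contact vertex $u_k$ with the rest of $T$ is blue; induction on $T'=T-V(T_k)$ gives a $q$-forcing set $S'$ of size $k-1$ for $T'$, and $S'\cup\{x_k\}$ then forces all of $T$ (first $T'$, then the remainder of $T_k$). The essential difference is that the chosen vertex in each part is a leaf of that part, guaranteeing it has at most one neighbour inside the part and so can initiate the cascade---something your top-vertex choice does not provide.
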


\begin{proof}
First, let $S=\{v_1,\ldots ,v_k\}$ be an $F_q$-set of $T$. Then a blue vertex can apply the color change rule only if it is adjacent to at most $q$ white neighbors. For each $i \in [k]$ let $T_i=\{v_i\}$. Then whenever a vertex $x$ from $T_i$ has at most $q$ white neighbors, by the color change rule these white neighbors become blue and we add them to the set $T_i$ (note that $x$ is not unique, but we may choose it arbitrarily).  Notice that since $T$ is a tree, the graphs induced by vertices in $T_i$ are also trees. Finally when every vertex is colored blue, the obtained sets $T_1,\ldots ,T_k$ form a partition of $T$ into induced trees of degree at most $q+1$, which infers $F_q(T) \geq P_q(T)$.

Now let $P_q(T)=k$ and $\{T_1,T_2,\ldots ,T_k\}$ be a smallest partition into induced trees of degree at most $q+1$. Let $\mathcal{T}$ be a graph defined as follows: $V(\mathcal{T}) = \{T_1,T_2,\ldots ,T_k\}$ and $T_iT_j \in E(\mathcal{T})$ if and only if there exist vertices $u \in T_i$, $v \in T_j$, such that $uv \in E(T)$. Then $\mathcal{T}$ is also a tree. We will use induction on $k$ to show that there exists an $F_q$-set $S$, such that $|S|=k$, therefore proving that $F_q(T) \leq |S| = P_q(T)$. First, let $k=1$, thus $\mathcal{T}$ consists only of vertex  $T_1$. Let $S=\{x\}$ where $x$ is a leaf. Then immediately the parent of $x$ is colored blue. Notice that since $\Delta(T) \leq q+1$, whenever a white vertex changes color because of a blue neighbor, it can then have at most $q$ white neighbors, therefore the process continues. Since $T$ is connected, this implies that $\{x\}$ is an $F_q$-set. 

For the induction step, let $k \geq 2$ and suppose that $S'$ is an $F_q$-set of cardinality $m$ for every tree $T'$ with $|P_q(T')| = m < k$. Let $T_k$ be a leaf in $\mathcal{T}$ and let $T_kT_1 \in E(\mathcal{T})$. Furthermore, let $u_k$ be the vertex in $T_k$, which is adjacent to a vertex $u_1 \in V(T_1)$. Let $x_k$ be a leaf in $T_k$. A similar color change process as in the base case takes place in $T_k$ - newly colored vertices have at most $q$ white neighbors. The only exception is vertex $u_k$. When $u_k$ becomes blue, it has at most $q$ white neighbors in $T_k$ and also one neighbor in $T_1$. Therefore it can have $q+1$ white neighbors in which case the process cannot continue. However, note that $u_k$ is already blue. Now let $T'=T-V(T_k)$. Then $|P_q(T')|=k-1$ and by the induction hypothesis, there exists an $F_q$-set $S'$ of $T'$, such that $|S'| = k-1$. We claim that $S=S' \cup \{x_k\}$ is an $F_q$-set of $T$. Indeed, we can start the color change process in $x_k$ and continue in $T_k$ until $u_k$ is colored by blue. Since $u_k$ is blue and as $u_ku_1$ is the only edge between $T'$ and $T_k$, we can apply the same color changing process as in $T'$ and color all vertices of the subtree $T'$ in $T$ to blue. Now, if at this point not all vertices of $T_k$ are blue, we can simply continue by applying the color change rule on $T_k$, because $u_k$ and its unique neighbor in $T'$ are already blue. In this way, also all vertices of $T_k$, and thus all vertices of $T$ become blue. With this we proved that $S$ is an $F_q$-set of $T$, and so $F_q(T) \leq P_q(T)$. 
\qed
\end{proof}

Next we will present an algorithm that computes $P_q(T)$ in a polynomial time. Start with some notations and observations.  Let $T$ be a tree. If $\Delta(T) \leq q+1$, then $P_q(T)=1$. Thus we may assume that $\Delta(T) > q+1$. We root $T$ in an arbitrary vertex $x_0$ which is the only vertex at depth 0.  Denote by $X_1$ all neighbors of $x_0$, i.e.\ all vertices at depth 1 of the rooted tree $T$. Furthermore, for any $i > 1$ denote by $X_i$ the vertices at depth $i$ of the rooted tree $T$. Denote by $d$ the depth of $T$, i.e.\ $V(T)=X_0 \cup X_1 \cup \ldots \cup X_d$. Moreover, for $xy \in E(T)$, $x \in X_i, y \in X_{i-1}$, denote by $T_x$ the connected component of $T-y$ containing $x$.

Let $i$ be the largest index such that there exists a vertex $x_i \in X_i$ with $\deg(x_i) > q+1$. If $i=0$, then $P_q(T)=\deg(x_0)-q$. Otherwise $x_i$ has exactly one neighbor, say $x_{i-1}$, in $X_{i-1}$ and $k > q$ neighbors $x_{i+1}^1,\ldots , x_{i+1}^k$ in $X_{i+1}$. In any smallest partition $\mathcal{P}$ of $T$ into induced trees of of maximum degree $q+1$, neighbors of $x_i$ are in $k-q+1$ different trees of $\mathcal{P}$ (i.e.\ $q+1$ neighbors of $x_i$ are in the same tree of $\mathcal{P}$). By the choice of $i$, $\Delta(T_{x_{i+1}^j}) \leq q+1$ for any $j \in \{1,\ldots, k\}$. Thus there are at least $k-q-1$ indices $j$ such that $T_{x_{i+1}^j} \in \mathcal{P}$.

\begin{algorithm}[hbt!]
{\caption{Smallest partition of $T$ into induced trees of max-degree $q+1$.}
\label{al:partition of a tree}
\KwIn{A rooted tree $T$ with vertices in $X_i$ at depth $i$.}
\KwOut{Smallest partition of $T$ into induced trees of maximum degree $q+1$.}

\BlankLine
{
	$\mathcal{P}=\emptyset$;\\        
	\While{$T \neq \emptyset$}{
          \If{$\Delta(T) \leq q+1$}{
		      $\mathcal{P}=\mathcal{P} \cup \{T\}$;\\
                $T=\emptyset$;\\
            }
            \Else{
                $i:=$ the largest index such that there exists a vertex $x_i \in X_i$ with $\deg(x_i) > q+1$;
                
                    ${\mathcal{P}}={\mathcal{P}} \cup \{ T_{x_{i+1}^{q+2}},\ldots , T_{x_{i+1}^{k}}\} \cup \{T'\}$, where $T'$ is a subtree of $T$ induced by $\{x_i\} \cup V(T_{x_{i+1}^1}) \cup \ldots \cup V(T_{x_{i+1}^{q+1}})$;\\
                    $T:=T-T_{x_i}$;
            }
        
        }

}
}
\end{algorithm}

\begin{thm}\label{thm:algorithm}
For any tree $T$, Algorithm~\ref{al:partition of a tree} returns a smallest partition of $T$ into induced trees of maximum degree $q+1$ and runs in linear time.  
\end{thm}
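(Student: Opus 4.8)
The plan is to prove two things: correctness (the output is indeed a smallest partition of $T$ into induced subtrees of maximum degree at most $q+1$) and the linear running time. For correctness I would argue by induction on $|V(T)|$, closely mirroring the discussion immediately preceding the algorithm. If $\Delta(T)\le q+1$, the algorithm outputs $\{T\}$, which is trivially optimal. Otherwise, let $i$ be the largest depth containing a vertex $x_i$ with $\deg(x_i)>q+1$, and let $x_{i-1}$ be its parent and $x_{i+1}^1,\dots,x_{i+1}^k$ its children (so $k\ge \deg(x_i)-1>q$; the case $i=0$, where $x_i$ has no parent and $k=\deg(x_0)>q+1$, is handled analogously and actually yields the exact value $P_q(T)=\deg(x_0)-q$). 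By the maximality of $i$, every $T_{x_{i+1}^j}$ already has maximum degree at most $q+1$. The key structural claim, which I would state and prove as a lemma, is: there is a smallest partition $\mathcal P$ of $T$ in which $T_{x_{i+1}^j}$ is a whole block of $\mathcal P$ for all but exactly $q+1$ of the children, and the remaining $q+1$ subtrees together with $x_i$ form one block. The exchange argument is: in \emph{any} smallest partition, $x_i$ lies in some block $B$; since $\deg(x_i)>q+1$, at least $\deg(x_i)-(q+1)$ of the edges at $x_i$ leave $B$, so at least $k-(q+1)$ children are separated from $x_i$; for each such separated child $x_{i+1}^j$, replacing its block and all descendants' blocks by the single block $V(T_{x_{i+1}^j})$ does not increase the number of blocks (it is already a valid block by the degree bound), and we may also assume the $q+1$ children that stay with $x_i$ drag their whole subtrees along. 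This shows the algorithm's greedy choice is consistent with some optimum; removing $T_{x_i}$ leaves a tree $T-T_{x_i}$ on fewer vertices with $P_q(T) = P_q(T-T_{x_i}) + (k-q-1) + 1$, and induction finishes the count. One should note that after deleting $T_{x_i}$ the parent $x_{i-1}$ loses a child, which can only help, and the recursion is well-posed because $x_i$ at depth $i$ is removed together with its subtree.

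For the running time, the point is that the whole computation can be organized so that each vertex is touched a constant number of times. I would preprocess $T$ by one DFS/BFS from the root, recording for every vertex its depth, its parent, its list of children, and its degree; also maintain, for each depth level, the list of vertices of degree $>q+1$, or more simply a global pointer that walks depths from $d$ downward. In each iteration of the while loop we find the deepest bad vertex $x_i$; since depths only decrease across iterations (deleting $T_{x_i}$ cannot create a vertex of degree $>q+1$ at depth $>i$, as the only degree that changes is $x_{i-1}$'s, which drops), the total work spent locating these vertices across all iterations is $O(|V(T)|)$. Forming the new blocks $T_{x_{i+1}^{q+2}},\dots,T_{x_{i+1}^k}$ and $T'$ and deleting $T_{x_i}$ costs time proportional to $|V(T_{x_i})|$, and these subtrees are pairwise disjoint and each vertex ends up in exactly one output block, so the cumulative cost is $O(|V(T)|)$. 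Hence the algorithm runs in linear time, and by Theorem~\ref{thm:q-forcing drevesa} this also gives a linear-time computation of $F_q(T)$.

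The main obstacle I anticipate is making the exchange argument fully rigorous, in particular verifying that the replacements described above genuinely preserve the property ``every block induces a subtree of maximum degree $\le q+1$'' and do not merge or disconnect blocks in unintended ways — one has to be careful that when the $q+1$ surviving children stay in $x_i$'s block, their descendant-subtrees can indeed all be absorbed without any vertex exceeding degree $q+1$ (this uses that each such subtree is attached to the block only through the single edge $x_i x_{i+1}^j$, and internally already has maximum degree $\le q+1$, so $x_i$ is the only vertex whose degree within the block could be large, and its degree there is exactly $q+1$). A secondary, more bookkeeping-type obstacle is the edge case $i=0$ and the interaction with the parent $x_{i-1}$ after deletion; these should be stated explicitly but cause no real trouble.
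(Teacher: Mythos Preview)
Your plan is sound and would yield a correct proof, but it takes a genuinely different route from the paper's. The paper does \emph{not} remove the whole subtree $T_{x_i}$ in one inductive step; instead it peels off a single leaf at a time. Concretely, the paper's induction splits into two cases: if there exists a support vertex $y$ with $\deg(y)\le q+1$, it deletes one leaf neighbour $x$ of $y$ and argues that the algorithm's output on $T-x$ is obtained from its output on $T$ by deleting $x$ from the unique block containing it (so $P_q(T)=P_q(T-x)$); otherwise every support vertex has degree at least $q+2$, one takes a deepest such support vertex $y$, and depending on whether $\deg(y)=q+2$ or $\deg(y)>q+2$ one removes either $T_y$ or a single leaf, in each case showing $P_q(T)=P_q(T')+1$. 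No exchange argument on an arbitrary optimal partition is ever needed---the paper only compares the algorithm's output on $T$ with its output on the smaller tree.

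Your approach, by contrast, works at the granularity of the algorithm itself: one while-iteration corresponds to one inductive step, and you aim to prove the recursion $P_q(T)=P_q(T-T_{x_i})+(k-q)$ directly. This is arguably more natural, and it avoids the paper's implicit assumption that the (nondeterministic) algorithm makes consistent choices on $T$ and on $T-x$. The price is exactly the exchange step you flag: as written, your sketch does not handle the sub-case where $x_{i-1}\in B$ (so only $q$, not $q+1$, children may stay with $x_i$). A clean way to close this is to bypass the exchange entirely and count: in any optimal partition $\mathcal P$, the blocks restricted to $T-T_{x_i}$ form a valid partition of $T-T_{x_i}$, while at least $k-q$ blocks of $\mathcal P$ lie entirely inside $T_{x_i}$ (since $x_i$'s block contains at most $q+1$ of its neighbours, the remaining children lie in pairwise distinct blocks confined to their own subtrees; treat the two cases $x_{i-1}\in B$ and $x_{i-1}\notin B$ separately). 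This gives $|\mathcal P|\ge P_q(T-T_{x_i})+(k-q)$ without needing the stronger structural claim. Your linear-time argument is essentially the same as the paper's.
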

\begin{proof}
Let $T$ be a tree rooted in an arbitrary vertex $x_0$ that is not a leaf. Let $X_i$ be the set of vertices of $T$ at depth $i$. We will show that a partition ${\mathcal{P}}$ of $T$ into induced trees of maximum degree $q+1$ obtained by Algorithm~\ref{al:partition of a tree} is a smallest such partition. The proof is by induction on the number of vertices of $T$. If $T$ is any tree with $\Delta(T) \leq q+1$, then the algorithm returns $\{T\}$ as the smallest partition into trees of maximum degree at most $q+1$. 

Assume now that $\Delta(T) > q+1$. If $T$ has a support vertex $y$ of degree at most $q+1$, then let $x$ be a leaf neighbor of $y$ and $T'=T-x$. Let $\mathcal{P}$ be the partition of $T$ into induced trees of maximum degree $q+1$ obtained by Algorithm~\ref{al:partition of a tree} and let $T_1\in \mathcal{P}$ be the tree containing $x$. Note that $y \in V(T_1)$, as $\deg(y) \leq q+1$. If $T_2=T_1-x$, then ${\mathcal{P}}'= ({\mathcal{P}}-\{T_1\}) \cup \{T_2\}$ is a partition of $T'$ into induced trees of maximum degree at most $q+1$ obtained by Algorithm~\ref{al:partition of a tree}. By induction hypothesis ${\mathcal{P}}'$ is a smallest such partition. Since for any induced subtree $T^*$ of $T$ it holds that $P_q(T^*) \leq P_q(T)$, it follows that $P_q(T) \leq |{\mathcal{P}}| =|{\mathcal{P}'}|=P_q(T') \leq P_q(T)$, and thus $\mathcal{P}$ is a smallest partition of $T$ into induced trees of maximum degree at most $q+1$.

Assume now that all support vertices of $T$ have degree at least $q+2$. Let $y$ be a support vertex at maximum possible depth.
Let $\mathcal{P}$ be the partition of $T$ into induced trees of maximum degree $q+1$ obtained by Algorithm~\ref{al:partition of a tree}. If $\deg_T(y) = q+2$, let $T'$ be the tree obtained from $T$ by deleting $y$ and all its leaf neighbors, i.e.\ $T'=T-T_y$. If all $q+2$ neighbors of $y$ are leaves, then $T$ is a star and the algorithm is clearly correct. Otherwise, it follows from the algorithm that $T_y \in \mathcal{P}$. Then ${\mathcal{P}}'={\mathcal{P}}-\{T_y\}$ is the partition of $T'$ into induced trees of maximum degree $q+1$ obtained by the algorithm. (Indeed in the first step of the algorithm ${\mathcal{P}}=\{T_y\}$ and $T=T-T_y$.) By the induction hypothesis ${\mathcal{P}}'$ is a smallest such partition, i.e.\ $P_q(T')=|{\mathcal{P}}'|$. Since $P_q(T)=P_q(T')+1$, $\mathcal{P}$ is a smallest such partition of $T$.

Finally, let $\deg_T(y)=k+1 > q+2$. Hence at least $k-q-1 \geq 1$ leaf neighbors of $y$ are singleton trees of $\mathcal{P}$. Let $x$ be one such leaf and let $T'=T-x$. Since ${\mathcal{P}}'={\mathcal{P}}-\{x\}$ is a partition of $T'$ into induced trees of maximum degree $q+1$ obtained by the algorithm, ${\mathcal{P}}'$ is smallest such partition by the induction hypothesis. Since $P_q(T)=P_q(T')+1$ we deduce that $|{\mathcal{P}}|=P_q(T)$, which completes the proof. 

The algorithm is clearly implemented by first implementing BFS order and then proceeding from bottom layers in direction to the root. In this way, the vertices $x_i$ with $\deg(x_i) > q+1$ are located exactly once and thus the algorithm is clearly linear. \qed

\end{proof}

\begin{figure}[ht]
\begin{center}
\begin{tikzpicture}[scale=1,style=thick,x=1cm,y=1cm]
\def\vr{2.5pt} 

\path (0,2) coordinate (0);

\path (0,0) coordinate (a1);
\path (1,0) coordinate (a2);
\path (2,0) coordinate (a3);
\path (3,0) coordinate (a4);
\path (4,0) coordinate (a5);
\path (5,0) coordinate (a6);
\path (1,1) coordinate (b1);
\path (4,1) coordinate (b2);
\path (1,-1) coordinate (c1);
\path (3,-1) coordinate (c2);

\draw (a1)--(b1)--(a2)--(c1)--(c2)--(a4)--(b2)--(a5);
\draw (a3)--(b1)--(b2)--(a6);

\draw (a1) [fill=white] circle (\vr);
\draw (a2) [fill=white] circle (\vr);
\draw (a3) [fill=white] circle (\vr);
\draw (a4) [fill=white] circle (\vr);
\draw (a5) [fill=white] circle (\vr);
\draw (a6) [fill=white] circle (\vr);

\draw (b1) [fill=white] circle (\vr);
\draw (b2) [fill=white] circle (\vr);

\draw (c1) [fill=white] circle (\vr);
\draw (c2) [fill=black] circle (\vr);

\draw[anchor = west] (c2) node {$x$};

\end{tikzpicture}
\end{center}

\caption{Graph $G$ with $F_2(G) < P_2(G)$.}
\label{fig:neenakost Z2 in P2}
\end{figure}
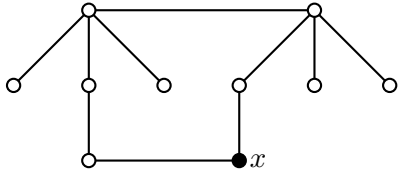

In the article~\cite{AIM}, the authors also showed that the inequality $F_q(G)\geq P_q(G)$ holds for all graphs $G$ when $q=1$. However a similar result for $q \geq 2$ is not true. Let $G$ be a graph as shown in Figure~\ref{fig:neenakost Z2 in P2}. Then $F_2(G)=1$ with $\{x\}$ being 
an $F_2$-set,  while $P_2(G)>1$, since $G$ contains a vertex of degree $4$. 

Next, we turn our attention to the bootstrap percolation number in trees; moreover, our goal is to determine $\sigma_{(p,q)}$ when $p>1$. Riedl described an algorithm for finding a smallest $p$-percolating set of a tree $T$ in~\cite{algoritem}. The algorithm works by reducing the tree $T$ into smaller subtrees $T_i$ until every vertex is either a leaf or of an appropriate degree. Then one can arbitrarily choose a $p$-percolating set of the subtree $T_i$ by selecting all of its leaves. Notice that leaves always satisfy the condition of a $(p,1)$ spreading set. When combining the percolating sets of trees $T_i$ and $T_j$ to form the percolating set of a larger tree, we do this by gluing the trees back together in a single vertex $v$. As it turns out, $v$ always acts as a leaf, when proceeding with the infection process, therefore this set also satisfies the condition of a $(p,1)$-spreading set. All in all, the algorithm from~\cite{algoritem} works also under the conditions of $(p,1)$-spreading. We summarize this in the following result.

\begin{thm}\label{thm:(p,1) na drevesih}
If $p \geq 2$ and $T$ is a tree with maximum degree $\Delta$, then $\sigma_{(p,q)}(T)= \sigma_{(p,1)}(T)$ for any $q\in \mathbb{N}\cup\{\infty\}$. In particular, $\sigma_{(p,\infty)}(T)=\sigma_{(p,\Delta)}(T) = \sigma_{(p,1)}(T)$. 
\end{thm}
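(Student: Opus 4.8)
The plan is to show the chain of equalities $\sigma_{(p,1)}(T) \le \sigma_{(p,q)}(T) \le \sigma_{(p,\infty)}(T) = \sigma_{(p,1)}(T)$, so that all the inner inequalities collapse to equalities. The first inequality $\sigma_{(p,q)}(T)\ge\sigma_{(p,q+1)}(T)\ge\cdots\ge\sigma_{(p,\infty)}(T)$ is immediate from Remark~\ref{r:basic}(5) applied repeatedly (or from the observation that enlarging $q$ only weakens the constraint on blue vertices), and similarly $\sigma_{(p,1)}(T)\ge\sigma_{(p,q)}(T)$ for any $q\ge 1$ holds for the same reason; note $\sigma_{(p,\Delta)}(T)=\sigma_{(p,\infty)}(T)$ is exactly Remark~\ref{r:basic}(3). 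So the entire content of the theorem is the single inequality $\sigma_{(p,1)}(T)\le\sigma_{(p,\infty)}(T)$, i.e.\ that every minimum $p$-percolating set of a tree is in fact a $(p,1)$-spreading set. For this I would lean on the structure exposed by Riedl's algorithm~\cite{algoritem}.

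The key step is to show that the particular $p$-percolating set $S$ produced by Riedl's recursive algorithm is a $(p,1)$-spreading set, i.e.\ that the percolation process on $S$ can be carried out so that at every moment when a white vertex $w$ gets infected, one of its already-blue neighbors has at most one white neighbor. I would argue this by induction on the recursion in Riedl's algorithm, following the narrative already given in the paragraph preceding the theorem. In the base case, $T$ is reduced to a subtree whose $p$-percolating set is chosen to be the set of all leaves; then every vertex that ever becomes infected is infected "from a leaf direction," and I would check that throughout the cascade the infecting neighbor has at most one white neighbor — intuitively because in a tree the infection front, starting from a set of leaves that $p$-percolates, moves inward in a way that each newly infected vertex has all-but-at-most-one of its neighbors already blue when it fires further (this is where the $(p,1)$ condition must be verified carefully against the degree/threshold bookkeeping of the reduction). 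For the inductive step, when two subtrees $T_i$, $T_j$ are glued back at a vertex $v$, one observes that in the combined process $v$ behaves exactly like a leaf with respect to the other subtree — once $v$ is blue it has at most one white neighbor across the gluing — so the $(p,1)$-property is preserved when concatenating the two spreading orders.

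The main obstacle I expect is precisely the verification, internal to Riedl's reduction, that the infecting vertex always has at most one white neighbor at the moment it fires; this requires understanding the specifics of which subtrees get peeled off and why the "residual" vertex $v$ functions as a leaf, rather than merely quoting that $S$ percolates. A cleaner alternative I would consider, to avoid re-deriving the internals of~\cite{algoritem}, is a direct structural argument: take any minimal (with respect to inclusion) $p$-percolating set $S$ of the tree $T$ and a percolation order on it, root $T$, and show that one can reorder the infections so that infections always propagate from the leaves toward the root, after which a short argument on a tree shows that whenever a vertex $u$ infects a child $w$, all other neighbors of $u$ (its parent and other children) are already blue — giving $u$ at most one white neighbor. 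If this direct argument goes through it supersedes the need for Riedl's algorithm in the proof of this particular theorem, and I would present whichever is shorter; but I anticipate the leaf-to-root reordering claim itself needs care, since a percolation order need not respect any root and swapping two infections can violate the threshold condition unless one is careful about the order in which a vertex accumulates its $p$ blue neighbors. Either way, the conclusion $\sigma_{(p,1)}(T)\le\sigma_{(p,\infty)}(T)$ closes the chain and proves the theorem.
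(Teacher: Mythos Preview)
Your proposal is correct and follows essentially the same approach as the paper. The paper's argument is in fact just the short paragraph immediately preceding the theorem: it observes that Riedl's algorithm produces a minimum $p$-percolating set consisting of leaves in the base subtrees, that leaves trivially satisfy the $(p,1)$ condition, and that when subtrees are glued back together at a vertex $v$, this $v$ acts as a leaf in the infection process, so the $(p,1)$ condition is preserved---exactly the inductive structure you outlined. Your write-up is more careful about flagging the verification steps and proposes a possible alternative leaf-to-root reordering argument, but the primary line you describe matches the paper's proof.
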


Riedl~\cite{algoritem} also proved that the algorithm runs in $O(n)$ time, which is thus the complexity of determining $\sigma_{(p,q)}(T)$ for an arbitrary $p\ge 2$, $q\in \mathbb{N}$, and an arbitrary tree $T$. 

We conclude this subsection by compressing all of its theorems into the following result.

\begin{cor}
If $T$ is a tree, $p\in \mathbb{N}$ and $q\in \mathbb{N}\cup\{\infty\}$, then $\spq(T)$ can be determined in linear time. 
\end{cor}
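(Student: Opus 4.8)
The plan is to assemble the corollary directly from the three algorithmic theorems of this subsection, splitting the proof into cases according to the value of $p$, and in each case exhibiting a linear-time procedure that actually produces a minimum $(p,q)$-spreading set (not merely its size).

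First I would dispose of the case $p=1$. Here $\spq(T)=\sigma_{(1,q)}(T)=F_q(T)$ by definition. If $q=1$ this is the zero forcing number $Z(T)=P_1(T)$, which is classically computable in linear time; for general $q\in\mathbb{N}$, Theorem~\ref{thm:q-forcing drevesa} gives $F_q(T)=P_q(T)$, and Theorem~\ref{thm:algorithm} asserts that Algorithm~\ref{al:partition of a tree} computes a smallest partition of $V(T)$ into induced subtrees of maximum degree at most $q+1$ in linear time. From such a partition $\{T_1,\dots,T_k\}$ one recovers an actual $F_q$-set of the same size by the construction in the proof of Theorem~\ref{thm:q-forcing drevesa} (pick one leaf from each $T_i$, ordering the parts along the auxiliary tree $\mathcal{T}$), and this post-processing is again linear. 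If $q=\infty$ then $\Delta(T)\le q$ trivially, so by Remark~\ref{r:basic}(3) we are in the percolation case handled below with $p=1$; alternatively one notes $F_\infty(T)=1$ for any tree, realised by any single leaf.

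Next I would handle $p\ge 2$. By Theorem~\ref{thm:(p,1) na drevesih}, $\spq(T)=\sigma_{(p,1)}(T)$ for every $q\in\mathbb{N}\cup\{\infty\}$, so it suffices to compute a minimum $(p,1)$-spreading set of $T$ in linear time. But $\sigma_{(p,1)}(T)$ equals the size of a smallest $p$-percolating set of $T$: one inclusion is Remark~\ref{r:basic}(5) together with the discussion preceding Theorem~\ref{thm:(p,1) na drevesih} showing that the set output by Riedl's algorithm, consisting entirely of leaves of the reduced subtrees (each of which trivially satisfies the $q=1$ condition), is in fact a $(p,1)$-spreading set; the other inclusion is that any $(p,1)$-spreading set is in particular a $p$-percolating set since it satisfies the weaker percolation rule. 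Hence Riedl's algorithm~\cite{algoritem}, which runs in $O(n)$ time, returns a minimum $(p,1)$-spreading set, and by the quoted theorem this is a minimum $(p,q)$-spreading set for the given $q$.

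The only genuine subtlety — and the step I would be most careful about — is the bookkeeping at the boundaries of the case analysis: making sure every pair $(p,q)$ with $p\in\mathbb{N}$ and $q\in\mathbb{N}\cup\{\infty\}$ is covered, in particular that $q=\infty$ is legitimate in each branch (for $p=1$ via the triviality $F_\infty(T)=1$, for $p\ge 2$ because Theorem~\ref{thm:(p,1) na drevesih} explicitly allows $q\in\mathbb{N}\cup\{\infty\}$), and that in every branch we are invoking an algorithm whose stated complexity is linear and whose output can be converted to an honest minimum spreading set with only linear overhead. None of this requires new ideas beyond the results already established in this subsection, so the corollary follows by simply citing Theorems~\ref{thm:q-forcing drevesa}, \ref{thm:algorithm} and \ref{thm:(p,1) na drevesih} in the appropriate cases.
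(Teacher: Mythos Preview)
Your proposal is correct and takes essentially the same approach as the paper, which presents the corollary without an explicit proof, simply stating that it ``compresses all of its theorems'' in the subsection. Your case split on $p$ and the invocation of Theorems~\ref{thm:q-forcing drevesa}, \ref{thm:algorithm}, and~\ref{thm:(p,1) na drevesih} (together with Riedl's $O(n)$ algorithm) is precisely the intended argument; you are if anything more careful than the paper in explicitly treating the boundary case $q=\infty$ when $p=1$.
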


\subsection{Bounds and extremal families}

We start by observing that for every $q\in \mathbb{N}\cup \{\infty\}$, we have
\begin{displaymath}
\spq(K_{1,n-1})=
    \begin{cases}
        n-1 & \text{if } 2\leq p\leq n-1\\
        n & \text{if } p\geq n.
    \end{cases}
\end{displaymath}

Next, we present a general upper bound on the $(p,q)$-spreading number of trees, and determine the extremal trees.

\begin{thm}\label{extremal_tree_max}
Let $T$ be a tree of order $n\geq 5$ and $q\in \mathbb{N}\cup\{\infty\}$.
\begin{itemize}
    \item[i)] If $p=2$, then $\sigma_{(p,q)}(T)\leq n-1$ with equality if and only if $T=K_{1,n-1}$;
    \item [ii)] If $p\geq 3$, then $\sigma_{(p,q)}(T)\leq n$ with equality if and only if $\Delta(T)<p$.
\end{itemize}
\end{thm}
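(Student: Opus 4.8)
The plan is to handle the two cases separately, and in each case to prove the inequality first and then the characterization of equality. For the inequality in part (ii), note that if $\Delta(T)<p$, then by Remark~\ref{r:basic}(2) we immediately get $\spq(T)=n$, so the bound is trivially attained; thus the content of (ii) is to show that whenever $\Delta(T)\ge p$ we have $\spq(T)\le n-1$, i.e.\ some vertex can be left white. For the inequality in part (i) with $p=2$, we must exhibit a $(2,q)$-spreading set of size $n-1$ for every tree of order $n\ge 5$, i.e.\ again show that at least one vertex can be omitted.

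For the inequality itself (in both parts, under the assumption $\Delta(T)\ge p$), the natural idea is to pick a vertex $v$ of maximum degree $\Delta\ge p$, and set $S=V(T)\setminus\{v\}$. Then every neighbor of $v$ is blue, so $v$ has $\Delta\ge p$ blue neighbors; we just need one blue neighbor of $v$ to have at most $q$ white neighbors. Since every neighbor $u$ of $v$ has only one white neighbor, namely $v$ itself (all of $V(T)\setminus\{v\}$ is blue), and $q\ge 1$, this condition holds, so $v$ turns blue and $S$ is a $(p,q)$-spreading set of size $n-1$. This already gives $\spq(T)\le n-1$ whenever $\Delta(T)\ge p$, and combined with the trivial case it settles the inequalities in both (i) and (ii), since for $p=2$ every tree of order $n\ge 2$ has $\Delta\ge 2$ except $K_2$, which is excluded by $n\ge 5$.

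The substantive part — and the main obstacle — is the equality characterization, in particular the ``only if'' direction of (i): if $p=2$ and $\spq(T)=n-1$, then $T=K_{1,n-1}$. Equivalently, we must show that every tree of order $n\ge 5$ that is \emph{not} a star admits a $(2,q)$-spreading set of size $n-2$, i.e.\ two vertices can be left white. The plan is: if $T$ is not a star and $n\ge 5$, then $T$ has two distinct leaves $\ell_1,\ell_2$ whose neighbors $u_1,u_2$ (support vertices) satisfy $\deg(u_i)\ge 2$, and moreover one can choose $\ell_1,\ell_2$ so that setting $S=V(T)\setminus\{\ell_1,\ell_2\}$ still percolates: each $u_i$ is blue with at most one white neighbor if $\ell_1,\ell_2$ have distinct support vertices, so each $\ell_i$ sees its support vertex as a blue neighbor — but $\ell_i$ has only \emph{one} neighbor, so it can never get $p=2$ blue neighbors! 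This shows that \emph{leaves can never be omitted} when $p=2$, which forces a different argument: the two omitted vertices must be internal. So instead we should choose $v$ of maximum degree together with a second internal vertex, or more carefully argue that in a non-star tree there are two non-leaf vertices (or one non-leaf of degree $\ge 2$ whose removal from $S$ still leaves everything recoverable through a chain of percolation steps). For part (ii) the ``only if'' direction is cleaner: if $\Delta(T)\ge p\ge 3$ we showed $\spq(T)\le n-1<n$, so equality $\spq(T)=n$ forces $\Delta(T)<p$; the converse is Remark~\ref{r:basic}(2). Thus the real work is a structural case analysis for non-star trees in part (i), distinguishing for instance the double star, the path $P_5$, spiders, and the general case, verifying in each that two internal (or suitably chosen) vertices can be recovered by repeated application of the spreading rule, using $n\ge 5$ to avoid the small sporadic exceptions.
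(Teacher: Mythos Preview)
Your treatment of part (ii) and of the inequality in part (i) is correct and matches the paper. Your observation that leaves can never be omitted when $p=2$ (a leaf has only one neighbor, so it can never acquire two blue neighbors) is exactly the right structural insight for the equality case in (i).

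Where you diverge from the paper is in how to finish part (i), i.e.\ how to show $\spq(T)\le n-2$ for a non-star tree. You propose a case analysis over double stars, $P_5$, spiders, and so on; this can be made to work but is more laborious than necessary. The paper's argument is much shorter: if $T=P_n$, invoke the explicit value $\sigma_{(2,q)}(P_n)=\lceil(n+1)/2\rceil$, which is strictly less than $n-1$ for $n\ge 5$. Otherwise $T$ is neither a path nor a star, so it has a vertex $u$ with $\deg(u)\ge 3$ and a \emph{distinct} vertex $v$ with $\deg(v)\ge 2$; then $V(T)\setminus\{u,v\}$ is already a $(2,q)$-spreading set. Indeed $u$ has at least two blue neighbors whether or not $v\in N(u)$, and (since $T$ is acyclic) at most one blue neighbor of $u$ can also be adjacent to $v$, so some blue neighbor of $u$ has $u$ as its unique white neighbor and forces $u$; once $u$ is blue, every neighbor of $v$ is blue and has $v$ as its only white neighbor.

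Note also that your tentative suggestion ``a non-star tree has two non-leaf vertices, so omit those'' does not work without the degree-$3$ condition on one of them: in $P_5$, omitting the adjacent internal vertices $v_2,v_3$ leaves each with only one blue neighbor, and percolation stalls. This is precisely why the paper handles paths separately via the formula rather than trying to fold them into the general argument.
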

\begin{proof}
    \noindent i) If $v$ is a vertex with $\deg (v)\geq 2$, then $V(T)\setminus \{v\}$ is a $(p,q)$-spreading set of $T$. So, it is clear that $\sigma_{(p,q)}(T)\leq n-1$. If $T=P_n$, then $\sigma_{(p,q)}(P_n)=\lceil(n+1)/2\rceil<n-1$ for $n\geq 5$. If $T\neq P_n$ and $T\neq K_{1,n-1}$, then there exist vertices $u$ and $v$ with $\deg (u)\geq 3$ and $\deg (v)\geq 2$. Now, $V(T)\setminus \{u,v\}$ is a $(p,q)$-spreading set and hence $\sigma_{(p,q)}(T)\leq n-2$.
    \noindent ii) It is clear that $\sigma_{(p,q)}(T)\leq n$, and by Remark~\ref{r:basic}(2), if $\Delta(T)<p$ then $\sigma_{(p,q)}(T)=n$. Suppose that $\Delta (T)\geq p$ and let $v$ be a vertex with $\deg (v)\geq p$. Now, $V(T)\setminus \{v\}$ is a $(p,q)$-spreading set and hence $\sigma_{(p,q)}(T)\leq n-1$.
\end{proof}
\medskip

By the lower bound from~\cite{algoritem} and noting that $\sigma_{(p,1)}(T)=\spq(T)$ for any tree $T$ we immediately infer the following.
\begin{thm}\label{thm:tree_lower}
    Let $T$ be a tree of order $n$ and $p\geq 2$. Then,
    $$\sigma_{(p,q)}(T)\geq \Big\lceil \frac{(p-1)}{p}n+\frac{1}{p}\Big\rceil.$$
\end{thm}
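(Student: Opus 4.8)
The plan is to give a short edge-counting argument applied directly to a $(p,q)$-spreading set of $T$; this bypasses Riedl's bound, although I will also point to that alternative route. Let $S$ be a $(p,q)$-spreading set of $T$ with $|S|=k$. Since the spreading process is monotone (recoloring a vertex blue never destroys the eligibility of another vertex to become blue), I may linearize it: there is an ordering $v_1,\dots,v_n$ of $V(T)$ with $S=\{v_1,\dots,v_k\}$ such that for every $j>k$ the vertex $v_j$ has at least $p$ neighbors in $\{v_1,\dots,v_{j-1}\}$. Indeed, at the moment $v_j$ turns blue the spreading rule guarantees it has at least $p$ blue neighbors; the extra requirement involving $q$ only strengthens this, so it plays no role in the count.

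For each $j>k$ fix a set $E_j$ consisting of exactly $p$ edges of $T$ joining $v_j$ to vertices of $\{v_1,\dots,v_{j-1}\}$. An edge $v_iv_j$ with $i<j$ can belong only to $E_j$ (the set indexed by its later endpoint), so the sets $E_j$, $j>k$, are pairwise disjoint. Since $T$ has $n-1$ edges,
\[
p(n-k)=\sum_{j>k}|E_j|\le |E(T)|=n-1 .
\]
Rearranging yields $k\ge \tfrac{(p-1)n+1}{p}=\tfrac{p-1}{p}n+\tfrac1p$, and as $k$ is an integer we conclude $k\ge\bigl\lceil\tfrac{p-1}{p}n+\tfrac1p\bigr\rceil$, which is the claimed inequality. (The hypothesis $p\ge 2$ is not needed for the argument; for $p=1$ the bound is the trivial $\spq(T)\ge 1$.)

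Alternatively, and in line with the remark preceding the statement, one can first apply Theorem~\ref{thm:(p,1) na drevesih} to replace $\sigma_{(p,q)}(T)$ by $\sigma_{(p,\infty)}(T)=m(T,p)$ and then quote Riedl's lower bound on the size of a $p$-percolating set of a tree from~\cite{algoritem}; the display above is essentially a proof of that bound. I do not expect a genuine obstacle here: the only point that deserves an explicit word is the linearization of the spreading process, which is routine, and the observation that the $q$-side of the spreading rule is irrelevant to the counting.
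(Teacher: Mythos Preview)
Your proof is correct. The edge-counting argument is valid: each of the $n-k$ vertices outside $S$ contributes $p$ distinct backward edges, these $p(n-k)$ edges are pairwise distinct since an edge is charged only to its later endpoint, and a tree has exactly $n-1$ edges; the inequality $p(n-k)\le n-1$ follows and rearranges to the claimed bound.

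The paper does not actually prove this theorem directly; it derives it in one line from Theorem~\ref{thm:(p,1) na drevesih} (so that $\spq(T)=\sigma_{(p,\infty)}(T)=m(T,p)$) together with Riedl's lower bound on $m(T,p)$ from~\cite{algoritem}. You explicitly mention this alternative, so you have recovered the paper's argument as well. Your direct route is more elementary and self-contained: it avoids invoking Theorem~\ref{thm:(p,1) na drevesih} (whose proof depends on the details of Riedl's algorithm) and works for any $q$ simultaneously, at the cost of reproving what is essentially Riedl's counting bound. The paper's route, by contrast, isolates the equality $\spq(T)=m(T,p)$ as a structural fact of independent interest and then imports the numerical bound. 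Either way the content is the same; your presentation simply makes the argument explicit rather than citing it.
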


Let $\operatorname{rem}(N,p)$ denote the remainder when $N$ is divided by $p$ and  let $f(n,p)=\lceil ((p-1)n+1)/p\rceil$. We say that a tree $T$ of order $n$ satisfies property $\mathcal{P}(n,p)$ if $V=V(T)$ can be partitioned into two subsets $S$ and $V\setminus S$ such that $|S|=f(n,p)$ and the vertices of $V\setminus S$ can be ordered as $v_1,\dots , v_{n-f(n,p)}$ and they satisfy the following:
\begin{itemize}
    \item[i)] $|N_{F_i}(v_{i})|\geq p$ for all $i$, and
    \item[ii)]  $\operatorname{rem}(n-1,p)\geq \sum_{i=1}^{n-f(n,p)}\left(|N_{F_{i}}(v_i)|-p\right)$, and
    \item[iii)] There are exactly $\operatorname{rem}(n-1,p)-\sum_{i=1}^{n-f(n,p)}\left(|N_{F_{i}}(v_i)|-p\right)$ edges in the subgraph induced by $S$,
\end{itemize}
where each $F_i$ is a subforest of $T$ defined recursively by 
$$F_1=\{v_1\}\cup S_1\ \text{and} \ F_{i+1}=F_i\cup \{v_{i+1}\} \cup S_{i+1}\ \text{for}\ 1\leq i\leq n-f(n,p)-1$$ and $S_i=\{u\in S\setminus \left(\bigcup\limits_{j<i}S_j\right):\ \text{there is a path in}\ T[S] \ \text{from}\ u\ \text{to a vertex in}\ N_S(v_i) \}$.

 There are many trees satisfying the property $\mathcal{P}(n,p)$. As an example, construct a tree $T^*$ as follows: let both $S=\{w_1,\dots , w_{|S|}\}$ and $V(T)\setminus S=\{v_1,\dots , v_{n-|S|}\}$ be independent sets with $|S|=f(n,p)$, and for $1\leq i \leq n-|S|-1$, $N(v_i)=\{w_{1+(i-1)(p-1)},\dots , w_{1+i(p-1)}\}$ and $N(v_{n-|S|})=\{w_{1+(n-|S|-1)(p-1)},\dots , w_{|S|}\}$, see Figure~\ref{fig:treeT*}. Observe that if $n-1$ is divisible by $p$, then $S$ is necessarily an independent set.

\begin{figure}[ht]
\begin{center}
\begin{multicols}{2}
\begin{tikzpicture}[scale=.75,style=thick,x=1cm,y=1cm]
\def\vr{2.5pt} 

\path (0,2) coordinate (0);

\path (0,0) coordinate (a1);
\path (1,0) coordinate (a2);
\path (2,0) coordinate (a3);
\path (3,0) coordinate (a4);
\path (4,0) coordinate (a5);
\path (5,0) coordinate (a6);
\path (6,0) coordinate (a7);
\path (7,0) coordinate (a8);
\path (1,1) coordinate (b1);
\path (3,1) coordinate (b2);
\path (3,1.5) coordinate (x2);
\path (5,1) coordinate (b3);

\draw (a1)--(b1)--(a3)--(b2)--(a5)--(b3)--(a7);
\draw (a2)--(b1);
\draw (a4)--(b2);
\draw (a6)--(b3);
\draw (a8)--(b3);

\draw (a1) [fill=black] circle (\vr);
\draw (a2) [fill=black] circle (\vr);
\draw (a3) [fill=black] circle (\vr);
\draw (a4) [fill=black] circle (\vr);
\draw (a5) [fill=black] circle (\vr);
\draw (a6) [fill=black] circle (\vr);
\draw (a7) [fill=black] circle (\vr);
\draw (a8) [fill=black] circle (\vr);

\draw (b1) [fill=white] circle (\vr);
\draw (b2) [fill=white] circle (\vr);
\draw (b3) [fill=white] circle (\vr);

\draw[anchor = east] (b1) node {$v_1$};
\draw[anchor = east] (b2) node {$v_2$};
\draw[anchor = south] (x2) node {$T^*$};
\draw[anchor = east] (b3) node {$v_3$};

\end{tikzpicture}

\columnbreak

\begin{tikzpicture}[scale=.75,style=thick,x=1cm,y=1cm]
\def\vr{2.5pt} 

\path (0,2) coordinate (0);

\path (0,0) coordinate (a1);
\path (1,0) coordinate (a2);
\path (2,0) coordinate (a3);
\path (3,0) coordinate (a4);
\path (4,0) coordinate (a5);
\path (5,0) coordinate (a6);
\path (6,0) coordinate (a7);
\path (7,0) coordinate (a8);
\path (1,1) coordinate (b1);
\path (3,1) coordinate (b2);
\path (3,1.5) coordinate (y2);
\path (5,1) coordinate (b3);

\draw (a1)--(b1)--(b2)--(a5)--(b3)--(a7);
\draw (b1)--(a3);
\draw (a2)--(b1);
\draw (a4)--(b2);
\draw (a6)--(b3);
\draw (a8)--(a7);

\draw (a1) [fill=black] circle (\vr);
\draw (a2) [fill=black] circle (\vr);
\draw (a3) [fill=black] circle (\vr);
\draw (a4) [fill=black] circle (\vr);
\draw (a5) [fill=black] circle (\vr);
\draw (a6) [fill=black] circle (\vr);
\draw (a7) [fill=black] circle (\vr);
\draw (a8) [fill=black] circle (\vr);

\draw (b1) [fill=white] circle (\vr);
\draw (b2) [fill=white] circle (\vr);
\draw (b3) [fill=white] circle (\vr);

\draw[anchor = east] (b1) node {$v_1$};
\draw[anchor = west] (b2) node {$v_2$};
\draw[anchor = west] (b3) node {$v_3$};
\draw[anchor = south] (y2) node {$T'$};

\end{tikzpicture}

\end{multicols}

\end{center}

\caption{Trees $T^*$ (left) and $T'$ (right) with $n=11$ and $p=3$.}
\label{fig:treeT*}
\end{figure}
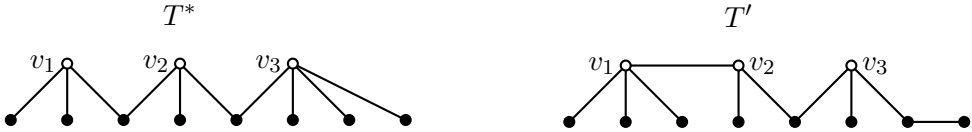

Next, we characterize all extremal trees in Theorem~\ref{thm:tree_lower} via property $\mathcal{P}(n,p)$. Given a spreading set $S$ of a graph $G$, we say that $(v_1,\dots , v_{n-|S|})$ is a $(p,q)$-{\it spreading sequence} if $V(G)\setminus S=\{v_1,\dots , v_{n-|S|}\}$ and there is a spreading process in which vertices can become blue in the order of the sequence. Note that spreading sequence is not necessarily unique. For example, in the tree $T^*$ in Figure~\ref{fig:treeT*}, any ordering of vertices in $\{v_1,v_2,v_3\}$ is a spreading sequence, and the tree $T'$ has exactly three spreading sequences which are $(v_1,v_2,v_3)$, $(v_1,v_3,v_2)$ and $(v_3,v_1,v_2)$. 
\begin{thm}
    Let $T$ be a tree of order $n$ and $p\geq 2$. Then, $\sigma_{(p,q)}(T)=f(n,p)$ if and only if $T$ satisfies property $\mathcal{P}(n,p)$.
\end{thm}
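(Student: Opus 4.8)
The plan is to prove both directions of the equivalence, relating the lower bound $\sigma_{(p,q)}(T)\geq f(n,p)$ (Theorem~\ref{thm:tree_lower}) to a careful accounting of ``slack'' in the spreading process. First recall from Theorem~\ref{thm:(p,1) na drevesih} that $\sigma_{(p,q)}(T)=\sigma_{(p,1)}(T)$ for any tree, so it suffices to work with $(p,1)$-spreading, which on trees is just $p$-percolation with the (automatically satisfied, since every newly infected vertex acts as a leaf) $q=1$ condition. The key quantitative identity underlying the lower bound is the following: if $S$ is a $p$-percolating set of a tree $T$ and $(v_1,\dots,v_{n-|S|})$ is a spreading sequence, then counting edges of $T$ by ``when their endpoints got infected'' gives $n-1=|E(T)|=e(T[S])+\sum_{i}|N_{F_i}(v_i)|$, where $F_i$ is exactly the subforest defined in property $\mathcal{P}(n,p)$ (the vertices blue at the moment just before $v_i$ is infected, restricted to the relevant component structure). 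Since each $|N_{F_i}(v_i)|\geq p$, this yields $n-1\geq e(T[S])+p(n-|S|)\geq p(n-|S|)$, i.e. $|S|\geq (p-1)n/p + 1/p$, reproving Theorem~\ref{thm:tree_lower}; more importantly it makes the deficiency $p|S| - ((p-1)n+1)$ an explicit nonnegative combination of the ``excess degrees'' $|N_{F_i}(v_i)|-p$ and the edge count $e(T[S])$.

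For the forward direction, suppose $\sigma_{(p,q)}(T)=f(n,p)$. Take a minimum $p$-percolating set $S$ with $|S|=f(n,p)$ and any spreading sequence $(v_1,\dots,v_{n-|S|})$; verify that the associated forests $F_i$ and sets $S_i$ coincide with those in the definition of $\mathcal{P}(n,p)$ (this requires checking that the connected pieces of the partition evolve as the recursive definition prescribes — that $S_i$ is precisely the set of previously-unused vertices of $S$ reachable in $T[S]$ from a neighbor of $v_i$). Property $\mathcal{P}(n,p)$(i) is then immediate. For (ii) and (iii): from the edge-counting identity $n-1=e(T[S])+\sum_i|N_{F_i}(v_i)|$ and from $|S|=f(n,p)=\lceil((p-1)n+1)/p\rceil$, a short arithmetic manipulation with $\operatorname{rem}(n-1,p)$ forces $\sum_i(|N_{F_i}(v_i)|-p) + e(T[S]) = \operatorname{rem}(n-1,p)$, which splits as exactly (ii) (the nonnegativity, since $e(T[S])\geq 0$) and (iii) (the exact count of edges in $T[S]$). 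The only subtlety is confirming that $n - f(n,p) = \lfloor(n-1)/p\rfloor$ and that $p\cdot f(n,p) - ((p-1)n+1) = (p - \operatorname{rem}(n-1,p)) \bmod p$ lines up with the claimed constants; this is elementary but must be done cleanly.

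For the reverse direction, assume $T$ satisfies $\mathcal{P}(n,p)$ with witnessing partition $S$, $V\setminus S=\{v_1,\dots,v_{n-f(n,p)}\}$. By the lower bound (Theorem~\ref{thm:tree_lower}) we already have $\sigma_{(p,q)}(T)\geq f(n,p)$, so it remains to show $S$ is a $(p,q)$-spreading set, equivalently (by Theorem~\ref{thm:(p,1) na drevesih}) a $p$-percolating set. Here condition (i), $|N_{F_i}(v_i)|\geq p$, is what we want: proceeding by induction on $i$, having infected $S\cup\{v_1,\dots,v_{i-1}\}$ and argued that all of $F_{i}\setminus\{v_i\}$ is already blue (its vertices are either in $S$, or among $v_1,\dots,v_{i-1}$, or lie on the connecting paths in $T[S]\cup\{v_j\}$ — one needs the recursive definition of $S_i$ to see these $S$-vertices get dragged in), vertex $v_i$ has at least $p$ blue neighbors and so becomes blue. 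After all $v_i$ are processed, every vertex of $T$ is blue, so $S$ percolates; conditions (ii) and (iii) are not needed for percolation itself, only to pin down $|S|=f(n,p)$, which is given. Thus $\sigma_{(p,q)}(T)\leq f(n,p)$, completing the argument.

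The main obstacle I anticipate is the bookkeeping around the forests $F_i$ and the sets $S_i$: one must argue that in \emph{any} valid spreading order the $S$-vertices get ``attached'' to the $v_i$'s exactly according to the recursive rule (so that the edge count $n-1 = e(T[S]) + \sum_i |N_{F_i}(v_i)|$ is exact rather than an inequality), and that a vertex of $S$ is never counted as a neighbor of two different $v_i$'s — both of which rely crucially on $T$ being a tree (no cycles) so that each edge is ``resolved'' exactly once and each $S$-component attaches to a unique $v_i$. Once that structural claim is nailed down, both directions reduce to the single edge-counting identity plus elementary arithmetic with ceilings and remainders.
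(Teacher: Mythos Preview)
Your proposal is correct and follows essentially the same approach as the paper. Both directions match: for $\mathcal{P}(n,p)\Rightarrow\sigma_{(p,q)}(T)=f(n,p)$, you (like the paper) observe that condition (i) makes $(v_1,\dots,v_{n-f(n,p)})$ a spreading sequence, and then invoke the lower bound; for the converse, both you and the paper derive the identity $n-1=|E(T[S])|+\sum_i|N_{F_i}(v_i)|$ and combine it with $n-1=p(n-f(n,p))+\operatorname{rem}(n-1,p)$ to extract (ii) and (iii).

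The one presentational difference is how that identity is obtained. The paper proves a finer intermediate statement, $|V(F_t)|=k_t+c_t+\sum_{i\le t}|N_{F_i}(v_i)|$, by induction on $t$ (tracking component counts $c_t$ and internal $S$-edges $k_t$), then specializes to $t=n-|S|$. Your direct edge-counting argument (each edge of $T$ is attributed either to $T[S]$ or to the unique $|N_{F_i}(v_i)|$ in which it first appears) reaches the same endpoint more quickly; acyclicity of $T$ is exactly what guarantees no edge is double-counted. Your anticipated ``bookkeeping obstacle'' is largely a non-issue: the $F_i$ and $S_i$ are defined purely combinatorially from $S$ and the chosen ordering, so there is nothing to verify beyond checking $N_{F_i}(v_i)\subseteq S\cup\{v_1,\dots,v_{i-1}\}$, which is immediate since every vertex of $F_i\setminus\{v_i\}$ lies in $S$ or among the earlier $v_j$'s.
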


\begin{proof}
    First, suppose that $T$ satisfies property $\mathcal{P}(n,p)$. Then, $(v_1,\dots ,v_{n-f(n,p)})$ is a $(p,q)$-spreading sequence of $T$ since $|N_{F_i}(v_{i})|\geq p$ for all $i$. Thus, $S$ is a $(p,q)$-spreading set of $T$ and $\sigma_{(p,q)}(T)\leq f(n,p)$. By Theorem~\ref{thm:tree_lower}, we have $\sigma_{(p,q)}(T)\geq f(n,p)$. Hence, we obtain $\sigma_{(p,q)}(T)=f(n,p)$.

    Now, let $S$ be a $(p,q)$-spreading set of $T$ and $(v_1,\dots , v_{n-|S|})$ be a $(p,q)$-spreading sequence of $T$.  At step $t$ of the spreading process, all vertices in $F_t$ are blue and it is clear that $|N_{F_i}(v_{i})|\geq p$ for all $i$, as $(v_1,\dots , v_{n-|S|})$ is a $(p,q)$-spreading sequence of $T$. Thus $i)$ holds. Let $k_t$ be the number of edges in the subgraph induced by $S\cap V(F_t)$ and $c_t$ be the number of connected components of $F_t$. Now, we claim that for all $1\leq t\leq n-|S|$,
    \begin{equation}\label{eq:property_tree}
        |V(F_t)|=k_t+c_t+\sum_{i=1}^t|N_{F_i}(v_i)|.
    \end{equation}
To prove \eqref{eq:property_tree}, we proceed by induction on $t$. For $t=1$, we have $|V(F_1)|=1+|S_1|$, $k_1=|S_1|-|N_S(v_1)|$, $c_1=1$ and $|N_{F_1}(v_1)|=|N_S(v_1)|$. Hence, we obtain \eqref{eq:property_tree}. Now suppose that $t>1$. By the induction hypothesis, $|V(F_{t-1})|=k_{t-1}+c_{t-1}+\sum_{i=1}^{t-1}|N_{F_i}(v_i)|$. Since $T$ is a tree, the vertex $v_t$ has at most one neighbor in each component of $F_{t-1}$. Moreover, $V(F_{t-1})\cap S_t=\emptyset$ and there is no edge from vertices of $S_t$ to vertices of $V(F_{t-1})$ by the definition. Hence, we obtain that $k_t=k_{t-1}+|S_t|-|N_{F_t}(v_t)|+|V(F_{t-1})\cap N_{F_t}(v_t)|$ and $c_t=1+c_{t-1}-|V(F_{t-1})\cap N_{F_t}(v_t)|$.

\begin{eqnarray*}
    |V(F_{t})|&=& 1+|S_t|+ |V(F_{t-1})|\\
    &=& 1+|S_t|+k_{t-1}+c_{t-1}+\sum_{i=1}^{t-1}|N_{F_i}(v_i)|\\
    &=& 1+c_{t-1}-|V(F_{t-1})\cap N_{F_t}(v_t)|+k_t+\sum_{i=1}^{t}|N_{F_i}(v_i)|\\
     &=& c_t+k_t+\sum_{i=1}^{t}|N_{F_i}(v_i)|\\
\end{eqnarray*}

 For $t=n-|S|$, we have $F_{n-|S|}=T$ and therefore, $c_{n-|S|}=1$ and $k_{n-|S|}=|E(G_S)|$. So, \eqref{eq:property_tree} becomes    
\begin{equation}\label{eq:extremal_tree_2}
    n-1=p\,(n-|S|)+|E(T[S])|+\sum_{i=1}^{n-|S|}(|N_{F_i}(v_i)|-p).
\end{equation}

\noindent It is easy to verify that $n-1=p\,(n-f(n,p))+\operatorname{rem}(n-1,p)$ for all $n$ and $p$. Hence, if $|S|=f(n,p)$, then $\operatorname{rem}(n-1,p)=|E(T[S])|+\sum_{i=1}^{n-|S|}(|N_{F_i}(v_i)|-p)$. Thus, every tree $T$ with a spreading set $S$ of size $f(n,p)$ satisfies (ii) and (iii) in the definition of property $\mathcal{P}(n,p)$.

 \end{proof}

\section{Spreading in grids}
\label{sec:grids}
In this section, we determine the $(p,q)$-spreading number of grids for all possible pairs $(p,q)$ with the exception when $p=3$. Let $G=P_m \square P_n$, and without loss of generality let $n \leq m$. 

Let $p=1$, that is consider the $q$-forcing problem in grids. If $q=1$, that is, considering the zero forcing problem, it was proved in~\cite{AIM} that $Z(G)=F_1(G)=\sigma_{(1,1)}(G)=n$. Clearly, we also have $F_q(G)=1$ for $q \geq 2$. 

The case when $p=3$ and $q \geq 4$, that is, $\spq(G)=m(G,3)$, was recently studied in~\cite{HH-2023+, romer}, but it seems that this is a difficult problem for which only partial results are known. This is the only case of grids that we will not resolve.

If $p > \Delta(G)=4$, Remark~\ref{r:basic}(2) implies that $\spq(G)=|V(G)|=mn$. The case $p=4$ is resolved in the following theorem.

\begin{thm}
If $m,n\ge 3$ are positive integers, then $\sigma_{(4,q)}(P_m \cp P_n)= 2m+2n-4 + \lfloor \frac{(m-2)(n-2)}{2} \rfloor$ for all $q \in \mathbb{N}$.
\end{thm}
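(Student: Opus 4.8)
The claim concerns $p=4$ on the grid $P_m\cp P_n$, where $\Delta=4$, so by Remark~\ref{r:basic}(1) every vertex of degree $<4$ — that is, every boundary vertex — must lie in any $(4,q)$-spreading set $S$. The number of boundary vertices is $2m+2n-4$, which accounts for the first part of the claimed formula. The remaining task is to understand which interior vertices must be blue, and this is where $q$ actually matters only through the ``at most $q$ white neighbours'' clause; I first argue that for $q\ge 1$ this clause is essentially free here, because once the whole boundary is blue, any interior vertex that gets infected has at most $3$ white neighbours at the moment it is infected (its boundary-side neighbour, if any, is already blue, and more generally an interior vertex with $4$ white neighbours cannot be infected first), so the spreading process on the interior behaves like $4$-neighbour bootstrap percolation with the forcing constraint automatically satisfied. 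Thus $\sigma_{(4,q)}(P_m\cp P_n)=2m+2n-4+x$, where $x$ is the minimum number of interior vertices one must additionally color so that, starting from all boundary vertices plus these $x$ interior vertices, repeated application of the $4$-neighbour rule infects the whole $(m-2)\times(n-2)$ interior.

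\textbf{Key steps.} First I would set up the reduction to the interior subgrid $G'\cong P_{m-2}\cp P_{n-2}$: each interior vertex of $G$ adjacent to the boundary has exactly one neighbour outside $G'$, all such neighbours permanently blue, so within $G'$ a vertex needs $4$ blue neighbours if it is a true interior vertex of $G'$, needs $3$ if it is on a side of $G'$, and needs $2$ if it is a corner of $G'$ — i.e. the process on the interior is exactly $2$-neighbour bootstrap percolation on $P_{m-2}\cp P_{n-2}$! This is the crucial observation: adding the always-blue outer boundary lowers the effective threshold on $G'$ by $2$. Therefore $x=m(P_{m-2}\cp P_{n-2},2)$. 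Second, I would invoke the classical result of Bollob\'as (cited in the paper) that the $2$-neighbour bootstrap percolation number of $P_a\cp P_b$ is $\lceil (a+b)/2\rceil$ — wait, I must be careful: the standard result is $m(P_a\cp P_b,2)=\lceil \frac{a+b}{2}\rceil$ only for the ``perimeter'' lower bound argument on the $n\times n$ grid; the precise statement the paper will use gives $m(P_a\cp P_b,2)$, and plugging $a=m-2$, $b=n-2$ should yield $\lceil \frac{m+n-4}{2}\rceil$, which does not match $\lfloor \frac{(m-2)(n-2)}{2}\rfloor$. So in fact the effective threshold on $G'$ must be higher than $2$ — re-examining: a true interior vertex of $G'$ is a true interior vertex of $G$ with no boundary neighbour, so it still needs all $4$ of its $G'$-neighbours blue; only side vertices of $G'$ drop to threshold $3$ and corner vertices to threshold $2$. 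So it is $4$-neighbour-like in the bulk, and one shows that in $4$-neighbour bootstrap percolation on a grid, essentially no infection can propagate — each newly infected bulk vertex needs all $4$ neighbours already blue, forcing (by a standard perimeter/area monotonicity argument) that the blue set can only grow by ``filling in holes'', and a short induction shows the minimum number of extra seeds to fill an $a\times b$ grid under the $4$-rule is $\lfloor ab/2\rfloor$ (color every other vertex in a checkerboard-like pattern adapted to parity). Setting $a=m-2$, $b=n-2$ gives the stated $\lfloor \frac{(m-2)(n-2)}{2}\rfloor$.

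\textbf{Carrying it out.} So the real content splits into: (a) the reduction showing $\sigma_{(4,q)}=2m+2n-4+m_4(m-2,n-2)$ where $m_4(a,b)$ is the minimum seed set for the $4$-neighbour rule on $P_a\cp P_b$ with no outer help; this needs the ``boundary must all be blue'' fact and the ``$q$ is irrelevant'' fact, both of which are quick once stated precisely (the $q$-irrelevance: any interior vertex infected after the boundary is blue has a blue neighbour toward the boundary side, hence $\le 3$ white neighbours — or if it is deep interior, one checks it has $4$ blue neighbours exactly when infected, and among those at least one has at most $q\ge 1$ white neighbours, since a full blue region adjacent to it has many vertices with few white neighbours); and (b) the extremal determination $m_4(a,b)=\lfloor ab/2\rfloor$: the upper bound by exhibiting a half-size seed set (e.g. all vertices $(i,j)$ with $i+j$ even, plus possibly adjusting parity at the last row/column — one verifies these infect everything), and the lower bound by the standard potential-function argument for $4$-neighbour percolation, namely that the quantity ``number of blue vertices minus number of monochromatic blue edges'' (or an analogous perimeter functional) is non-increasing, giving a lower bound on $|S|$ that works out to $\lceil ab/2\rceil$ — here I must double-check whether the answer is $\lfloor ab/2\rfloor$ or $\lceil ab/2\rceil$ and reconcile with the floor in the statement, which suggests the seed set can sometimes be one smaller than the naive checkerboard, presumably because a corner or edge vertex of $G'$ has threshold below $4$ thanks to its boundary neighbour in $G$, letting one start an infection there and save a seed; tracking this parity/boundary bookkeeping carefully is the main obstacle. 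The cleanest route is probably: prove the two-sided bound for $m_4(a,b)$ on the \emph{pure} grid (threshold $4$ everywhere, answer $\lceil ab/2\rceil$ perhaps), then separately account for the at-most-$2$ infections one can seed ``for free'' using the outer boundary of $G$, which knocks the count down to the stated floor; I expect the bulk of the write-up to be this final reconciliation of parities together with the perimeter lower-bound inequality.
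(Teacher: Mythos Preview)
Your proposal circles around the right answer but misses the one clean observation that makes the paper's proof two paragraphs long: \emph{if two adjacent interior vertices are both white, neither can ever become blue}. Indeed, each of them has the other as a neighbour, so each has at most three blue neighbours at every stage, and since $p=4$ the colour change rule never fires for either. Hence the set of interior vertices \emph{not} in $S$ must be an independent set of the interior $(m-2)\times(n-2)$ grid; equivalently, $S$ restricted to the interior is a vertex cover. The minimum vertex cover number of $P_{m-2}\cp P_{n-2}$ is $\lfloor (m-2)(n-2)/2\rfloor$ (its complement is a maximum independent set, which in a bipartite grid is the larger colour class). That is the entire lower bound.

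Your approach---perimeter/potential functions, reductions to $m_4(a,b)$, worrying about different thresholds at the edge of $G'$---is far more machinery than needed, and you yourself flag that you cannot resolve the floor-versus-ceiling question. That confusion evaporates once you see it as a vertex cover problem: the white vertices form an independent set, so their number is at most the independence number $\lceil (m-2)(n-2)/2\rceil$, giving exactly $\lfloor (m-2)(n-2)/2\rfloor$ blue interior vertices. No parity bookkeeping, no potential function, no ``free infections from the outer boundary''. For the upper bound the paper simply takes the boundary together with a minimum vertex cover of the interior; then every white vertex has all four neighbours blue, and one checks (e.g.\ by processing white vertices layer by layer from the boundary inward) that the $q\ge 1$ forcing condition is always met via a blue neighbour with at most one remaining white neighbour. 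Your discussion of why $q$ is irrelevant is roughly correct in spirit, but again more elaborate than necessary once the independent-set structure of the white vertices is in hand.
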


\begin{proof}
Let $V(P_m)=\lbrace 1,2,\ldots ,m \rbrace$ and $V(P_n)=\lbrace 1,2,\ldots ,n \rbrace$ and let $S$ be a $(4,q)$-spreading set of $P_m \cp P_n$. (Let us call vertices $x\in P_m \cp P_n$ with $\deg(x)<4$ {\em boundary vertices}.) By Remark~\ref{r:basic}(1), all $2m+2n-4$ boundary vertices of $P_m \Box P_n$ are contained in $S$. From the remaining $(m-2) \times (n-2)$ grid, we claim that we must select vertices that form a vertex cover. Indeed, suppose that there exists an edge $uv$ such that $u \notin S$ and $v \notin S$. Then in any step of the color changing procedure, $u$ and $v$ each have at most three blue neighbors, and thus $u$ and $v$ cannot be colored blue, and so $S$ is not a $(4,q)$-spreading set, a contradiction. As the minimum vertex cover number of the $(m-2) \times (n-2)$ grid is $\lfloor \frac{(m-2)(n-2)}{2} \rfloor$, we get $|S| \geq 2m+2n-4 + \lfloor \frac{(m-2)(n-2)}{2} \rfloor$. For the converse, one can easily check that the boundary vertices of $P_m \Box P_n$ together with the minimum vertex cover set of the remaining $(m-2)\times (n-2)$ grid forms a $(4,q)$-spreading set and thus $\sigma_{(4,q)}(P_m \cp P_n) \leq 2m+2n-4 + \lfloor \frac{(m-2)(n-2)}{2} \rfloor$. \qed
 
\end{proof}

\begin{figure}[ht!]
\begin{center}
\begin{tikzpicture}[scale=0.8,style=thick,x=1cm,y=1cm]
\def\vr{2.5pt} 

\path (-1,1) coordinate (a);
\path (-1,2) coordinate (b);
\path (-1,3) coordinate (c);
\path (-1,4) coordinate (d);
\path (-1,5) coordinate (e);
\path (0,0.2) coordinate (1);
\path (1,0.2) coordinate (2);
\path (2,0.2) coordinate (3);
\path (3,0.2) coordinate (4);
\path (4,0.2) coordinate (5);
\path (5,0.2) coordinate (6);
\path (6,0.2) coordinate (7);
\path (7,0.2) coordinate (8);
\path (8,0.2) coordinate (9);
\path (9,0.2) coordinate (10);

\path (0,1) coordinate (a1);
\path (0,2) coordinate (b1);
\path (0,3) coordinate (c1);
\path (0,4) coordinate (d1);
\path (0,5) coordinate (e1);

\path (1,1) coordinate (a2);
\path (1,2) coordinate (b2);
\path (1,3) coordinate (c2);
\path (1,4) coordinate (d2);
\path (1,5) coordinate (e2);

\path (2,1) coordinate (a3);
\path (2,2) coordinate (b3);
\path (2,3) coordinate (c3);
\path (2,4) coordinate (d3);
\path (2,5) coordinate (e3);

\path (3,1) coordinate (a4);
\path (3,2) coordinate (b4);
\path (3,3) coordinate (c4);
\path (3,4) coordinate (d4);
\path (3,5) coordinate (e4);

\path (4,1) coordinate (a5);
\path (4,2) coordinate (b5);
\path (4,3) coordinate (c5);
\path (4,4) coordinate (d5);
\path (4,5) coordinate (e5);

\path (5,1) coordinate (a6);
\path (5,2) coordinate (b6);
\path (5,3) coordinate (c6);
\path (5,4) coordinate (d6);
\path (5,5) coordinate (e6);

\path (6,1) coordinate (a7);
\path (6,2) coordinate (b7);
\path (6,3) coordinate (c7);
\path (6,4) coordinate (d7);
\path (6,5) coordinate (e7);

\path (7,1) coordinate (a8);
\path (7,2) coordinate (b8);
\path (7,3) coordinate (c8);
\path (7,4) coordinate (d8);
\path (7,5) coordinate (e8);

\path (8,1) coordinate (a9);
\path (8,2) coordinate (b9);
\path (8,3) coordinate (c9);
\path (8,4) coordinate (d9);
\path (8,5) coordinate (e9);

\path (9,1) coordinate (a10);
\path (9,2) coordinate (b10);
\path (9,3) coordinate (c10);
\path (9,4) coordinate (d10);
\path (9,5) coordinate (e10);

\draw (a1)--(b1)--(c1)--(d1)--(e1);
\draw (a2)--(b2)--(c2)--(d2)--(e2);
\draw (a3)--(b3)--(c3)--(d3)--(e3);
\draw (a4)--(b4)--(c4)--(d4)--(e4);
\draw (a5)--(b5)--(c5)--(d5)--(e5);
\draw (a6)--(b6)--(c6)--(d6)--(e6);
\draw (a7)--(b7)--(c7)--(d7)--(e7);
\draw (a8)--(b8)--(c8)--(d8)--(e8);
\draw (a9)--(b9)--(c9)--(d9)--(e9);
\draw (a10)--(b10)--(c10)--(d10)--(e10);

\draw (a1)--(a2)--(a3)--(a4)--(a5)--(a6)--(a7)--(a8)--(a9)--(a10);
\draw (b1)--(b2)--(b3)--(b4)--(b5)--(b6)--(b7)--(b8)--(b9)--(b10);
\draw (c1)--(c2)--(c3)--(c4)--(c5)--(c6)--(c7)--(c8)--(c9)--(c10);
\draw (d1)--(d2)--(d3)--(d4)--(d5)--(d6)--(d7)--(d8)--(d9)--(d10);
\draw (e1)--(e2)--(e3)--(e4)--(e5)--(e6)--(e7)--(e8)--(e9)--(e10);

\draw (a1) [fill=black] circle (\vr);
\draw (b1) [fill=white] circle (\vr);
\draw (c1) [fill=white] circle (\vr);
\draw (d1) [fill=white] circle (\vr);
\draw (e1) [fill=white] circle (\vr);

\draw (a2) [fill=white] circle (\vr);
\draw (b2) [fill=black] circle (\vr);
\draw (c2) [fill=white] circle (\vr);
\draw (d2) [fill=white] circle (\vr);
\draw (e2) [fill=white] circle (\vr);

\draw (a3) [fill=white] circle (\vr);
\draw (b3) [fill=white] circle (\vr);
\draw (c3) [fill=black] circle (\vr);
\draw (d3) [fill=white] circle (\vr);
\draw (e3) [fill=white] circle (\vr);

\draw (a4) [fill=white] circle (\vr);
\draw (b4) [fill=white] circle (\vr);
\draw (c4) [fill=white] circle (\vr);
\draw (d4) [fill=black] circle (\vr);
\draw (e4) [fill=white] circle (\vr);

\draw (a5) [fill=white] circle (\vr);
\draw (b5) [fill=white] circle (\vr);
\draw (c5) [fill=white] circle (\vr);
\draw (d5) [fill=white] circle (\vr);
\draw (e5) [fill=black] circle (\vr);

\draw (a6) [fill=white] circle (\vr);
\draw (b6) [fill=white] circle (\vr);
\draw (c6) [fill=white] circle (\vr);
\draw (d6) [fill=white] circle (\vr);
\draw (e6) [fill=white] circle (\vr);

\draw (a7) [fill=white] circle (\vr);
\draw (b7) [fill=white] circle (\vr);
\draw (c7) [fill=white] circle (\vr);
\draw (d7) [fill=white] circle (\vr);
\draw (e7) [fill=black] circle (\vr);

\draw (a8) [fill=white] circle (\vr);
\draw (b8) [fill=white] circle (\vr);
\draw (c8) [fill=white] circle (\vr);
\draw (d8) [fill=white] circle (\vr);
\draw (e8) [fill=white] circle (\vr);

\draw (a9) [fill=white] circle (\vr);
\draw (b9) [fill=white] circle (\vr);
\draw (c9) [fill=white] circle (\vr);
\draw (d9) [fill=white] circle (\vr);
\draw (e9) [fill=black] circle (\vr);

\draw (a10) [fill=white] circle (\vr);
\draw (b10) [fill=white] circle (\vr);
\draw (c10) [fill=white] circle (\vr);
\draw (d10) [fill=white] circle (\vr);
\draw (e10) [fill=black] circle (\vr);

\draw[anchor = east] (a) node {1};
\draw[anchor = east] (b) node {2};
\draw[anchor = east] (c) node {3};
\draw[anchor = east] (d) node {4};
\draw[anchor = east] (e) node {5};
\draw[anchor = north] (1) node {1};
\draw[anchor = north] (2) node {2};
\draw[anchor = north] (3) node {3};
\draw[anchor = north] (4) node {4};
\draw[anchor = north] (5) node {5};
\draw[anchor = north] (6) node {6};
\draw[anchor = north] (7) node {7};
\draw[anchor = north] (8) node {8};
\draw[anchor = north] (9) node {9};
\draw[anchor = north] (10) node {10};

\end{tikzpicture}
\end{center}

\caption{A minimum $(2,4)$-spreading set $S$ for $G=P_{10} \cp P_5$.}
\label{fig:(2,4)-spreading set}
\end{figure}
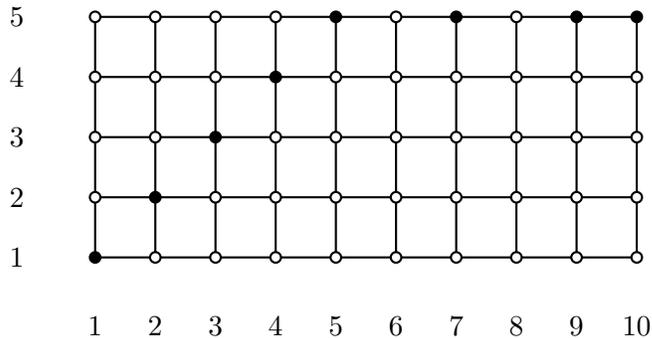

\bigskip 

Next, we consider the case $p=2$.
Let $G=P_m \square P_n$, and without loss of generality, suppose that $n \leq m$. 
Bootstrap percolation was already studied in grids by Bollob{\' a}s, who proved that $m(G,2)=\sigma_{(2,\infty)}(G) = \lceil \frac{n+m}{2} \rceil$~\cite{Bol-2006}. Since $\Delta (G) = 4$ it follows from Remark~\ref{r:basic}(3) that $\sigma_{(2,q)}(G) =\sigma_{(2,\infty)}(G) = \lceil \frac{n+m}{2} \rceil$ for all $q \geq 4$. In his solution Bollob\' as proposed a $(2, \infty)$-spreading set (2-bootstrap percolating set) which consists of $n$ diagonal vertices on the first $n$ columns and in the remaining columns $\lceil \frac{m-n}{2} \rceil$ vertices, one from every other column and one from the last column (see Figure~\ref{fig:(2,4)-spreading set}). Notice that this set is also a $(2,3)$- and a $(2,2)$-spreading set. Therefore Remark~\ref{r:basic}(5) implies that $\lceil \frac{n+m}{2} \rceil = \sigma_{(2,\infty)}(G) \leq \sigma_{(2,3)}(G) \leq \sigma_{(2,2)}(G) \leq  \lceil \frac{n+m}{2} \rceil$. Thus the only unsolved case for $p=2$ is the $(2,1)$-spreading number of $G$, which we consider in the next result.

\begin{thm}
If $m,n\ge 3$ are positive integers, then $\sigma_{(2,1)}(P_m\cp P_n) = \lceil \frac{n+m+1}{2} \rceil$.
\end{thm}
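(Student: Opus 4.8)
The plan is to prove $\sigma_{(2,1)}(P_m\cp P_n)=\lceil\frac{n+m+1}{2}\rceil$ by establishing the matching lower and upper bounds, exploiting the fact that under $(2,1)$-spreading a newly blue vertex may only force if it currently has at most one white neighbor, so the dynamics are considerably more restrictive than ordinary $2$-bootstrap percolation.

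For the \textbf{upper bound}, I would start from Bollob\'as's diagonal construction (the set $S$ drawn in Figure~\ref{fig:(2,4)-spreading set}) which has size $\lceil\frac{n+m}{2}\rceil$ and is a $(2,\infty)$-spreading set, and I would check whether it is already a $(2,1)$-spreading set; if not, I would add one extra vertex to repair the first forcing step (or the ``other-every-column'' part of the construction), giving size $\lceil\frac{n+m}{2}\rceil+1=\lceil\frac{n+m+2}{2}\rceil$ in the bad parity case and $\lceil\frac{n+m+1}{2}\rceil$ otherwise. Concretely, I would exhibit an explicit set and trace the spreading order: first fill the $n\times n$ diagonal block (each vertex to be infected has exactly two already-blue neighbors, and I must verify at each step that the forcing vertex sees at most one white neighbor — this is where the placement of the diagonal pattern matters), then sweep rightward through the remaining $m-n$ columns using the single chosen vertex per pair of columns together with the already-blue column to its left, again checking the ``$\le 1$ white neighbor'' condition holds for whichever blue vertex does the forcing. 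Care is needed at the boundary rows (degree-$3$ and degree-$2$ vertices) and at the last column.

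For the \textbf{lower bound}, the idea is a perimeter/weight argument in the spirit of Bollob\'as's proof for $m(G,2)$, suitably strengthened by $+1$ to account for the $q=1$ restriction. One clean approach is a potential-function argument: assign to the set $B$ of currently-blue vertices the quantity $\Phi(B)=$ (half the perimeter of $B$) $=\frac12\big(\text{number of edges of }G\text{ with exactly one endpoint in }B\big)$ plus the number of boundary segments of the grid not covered by $B$, and show that $\Phi$ is non-increasing under a $(2,1)$-spreading step while $\Phi(V(G))$ forces a certain minimum; this recovers Bollob\'as's $\lceil\frac{n+m}{2}\rceil$ bound. To gain the extra $+1$, I would argue that equality in the perimeter bound throughout the whole process would force every forcing step to keep the perimeter exactly constant, which in the diagonal-growth regime requires the forcing vertex to have zero white neighbors after the force — but combining this with Lemma~\ref{lema: delta > q} (some $S$-vertex has $|N_S(v)|\ge\deg_G(v)-1$) and the structure of the very first and very last moves, one shows a parity obstruction: the process cannot be ``perfectly efficient'' everywhere, so at least one step loses a half-unit of potential, yielding $|S|\ge\lceil\frac{n+m}{2}\rceil+\frac12$, i.e. $|S|\ge\lceil\frac{n+m+1}{2}\rceil$ after rounding. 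Alternatively, and perhaps more robustly, one can reduce to strips: by Remark~\ref{r:basic}(1) all $2m+2n-4$ boundary vertices need not be in $S$ here (unlike the $p=4$ case), but degree-$2$ corners do, so I would analyze how infection can ever ``turn a corner'' and show each turn costs an extra seed.

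The \textbf{main obstacle} I anticipate is the lower bound, specifically pinning down the exact extra $+1$: the perimeter inequality alone only gives $\lceil\frac{n+m}{2}\rceil$, and squeezing out one more unit requires a careful case analysis of \emph{how} blue regions can grow under the $q=1$ rule — in particular ruling out that a blue region can be grown with perimeter strictly decreasing often enough to compensate. I expect the cleanest route is to classify the connected blue ``fronts'': under $(2,1)$-spreading an isolated single blue vertex can never force (it would need a white neighbor to have $\ge 2$ blue neighbors, impossible from one blue vertex with $\ge 2$ white neighbors unless that white vertex has another blue neighbor), so growth only happens at ``$L$-shaped'' or ``staircase'' interfaces, and every such staircase, to keep forcing legally, must either be seeded along a full diagonal or pay an extra vertex whenever it changes direction; counting direction changes needed to cover an $m\times n$ rectangle gives the $+1$. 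I would present the lower bound via this staircase/monovariant argument and keep the upper bound fully explicit with a labelled figure.
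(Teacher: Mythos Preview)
Your proposal has the right ingredients but misses the clean structural observation that makes the paper's proof short. The key is a parity split: when $n+m$ is odd, $\lceil\frac{n+m+1}{2}\rceil=\lceil\frac{n+m}{2}\rceil$, so the Bollob\'as bound $\sigma_{(2,\infty)}(G)=\lceil\frac{n+m}{2}\rceil$ already gives the lower bound for free and no ``extra $+1$'' is needed at all. The only case requiring work is $n+m$ even, and there the paper's lower bound is a two-line argument: Lemma~\ref{lema: delta > q} (which you cite) says some $v\in S$ has $|N_S(v)|\ge\deg_G(v)-1\ge 1$, so $S$ contains an adjacent pair; this pair contributes at most $6$ (not $8$) to the initial perimeter, so if $|S|\le\frac{n+m}{2}$ the initial perimeter is at most $6+4(\frac{n+m}{2}-2)=2(n+m)-2<2(n+m)$, contradicting the standard non-increasing-perimeter argument. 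That is the entire lower bound --- no staircase classification, no tracking of equality cases through the whole process, no potential function beyond the bare perimeter.

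Your elaborate plan for the lower bound (perfectly-efficient steps, direction changes, turning corners) is unnecessary, and one side remark is incorrect: for $p=2$ the degree-$2$ corners are \emph{not} forced into $S$ by Remark~\ref{r:basic}(1), since that remark requires $\deg(v)<p$. For the upper bound, the paper does not patch the diagonal construction of Figure~\ref{fig:(2,4)-spreading set}; it instead gives explicit ``L-shaped'' sets along the first row and first column (e.g.\ $\{(1,1),(2,1),(4,1),\dots,(m,1),(1,3),(1,5),\dots,(1,n)\}$ when $m$ even, $n$ odd), with small endpoint modifications depending on the parities of $m$ and $n$, and verifies directly that the $(2,1)$ rule propagates. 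Your plan to ``check and repair'' the diagonal set might work but would require more case analysis than simply writing down these L-shaped sets.
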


\begin{proof}
First let $n+m$ be odd and without loss of generality let $m$ be even and $n$ odd. Note that $\lceil \frac{n+m+1}{2} \rceil = \lceil \frac{n+m}{2} \rceil$, therefore $\lceil \frac{n+m}{2} \rceil = \sigma_{(2,\infty)}(G) \leq \sigma_{(2,1)}(G)$. Thus it remains to find a $(2,1)$-spreading set of the same size. Consider the set $S= \lbrace (1,1),(2,1),(4,1), \dots ,(m,1),(1,3),(1,5), \dots ,(1,n)\rbrace$ of size $\frac{n+1}{2} + \frac{m}{2} = \lceil \frac{n+m}{2} \rceil$ (see Figure~\ref{fig:(2,1)-spreading set}). It is not hard to verify that $S$ is a $(2,1)$-spreading set of $G$. For instance, by using the color change rule one can find the following sequence of color changes: $$(1,2),(2,2),(3,1),(3,2),(2,3),(1,4),(2,4),(3,3),(4,2),(5,1),(5,2),\dots $$

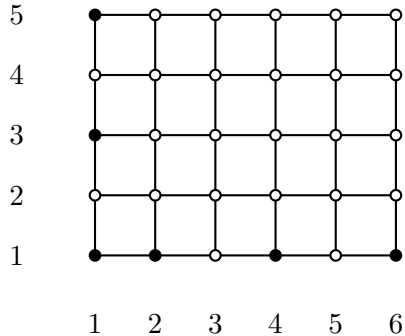
\begin{figure}[ht!]
\begin{center}
\begin{tikzpicture}[scale=0.8,style=thick,x=1cm,y=1cm]
\def\vr{2.5pt} 

\path (-1,1) coordinate (a);
\path (-1,2) coordinate (b);
\path (-1,3) coordinate (c);
\path (-1,4) coordinate (d);
\path (-1,5) coordinate (e);
\path (0,0.2) coordinate (1);
\path (1,0.2) coordinate (2);
\path (2,0.2) coordinate (3);
\path (3,0.2) coordinate (4);
\path (4,0.2) coordinate (5);
\path (5,0.2) coordinate (6);

\path (0,1) coordinate (a1);
\path (0,2) coordinate (b1);
\path (0,3) coordinate (c1);
\path (0,4) coordinate (d1);
\path (0,5) coordinate (e1);

\path (1,1) coordinate (a2);
\path (1,2) coordinate (b2);
\path (1,3) coordinate (c2);
\path (1,4) coordinate (d2);
\path (1,5) coordinate (e2);

\path (2,1) coordinate (a3);
\path (2,2) coordinate (b3);
\path (2,3) coordinate (c3);
\path (2,4) coordinate (d3);
\path (2,5) coordinate (e3);

\path (3,1) coordinate (a4);
\path (3,2) coordinate (b4);
\path (3,3) coordinate (c4);
\path (3,4) coordinate (d4);
\path (3,5) coordinate (e4);

\path (4,1) coordinate (a5);
\path (4,2) coordinate (b5);
\path (4,3) coordinate (c5);
\path (4,4) coordinate (d5);
\path (4,5) coordinate (e5);

\path (5,1) coordinate (a6);
\path (5,2) coordinate (b6);
\path (5,3) coordinate (c6);
\path (5,4) coordinate (d6);
\path (5,5) coordinate (e6);

\draw (a1)--(b1)--(c1)--(d1)--(e1);
\draw (a2)--(b2)--(c2)--(d2)--(e2);
\draw (a3)--(b3)--(c3)--(d3)--(e3);
\draw (a4)--(b4)--(c4)--(d4)--(e4);
\draw (a5)--(b5)--(c5)--(d5)--(e5);
\draw (a6)--(b6)--(c6)--(d6)--(e6);

\draw (a1)--(a2)--(a3)--(a4)--(a5)--(a6);
\draw (b1)--(b2)--(b3)--(b4)--(b5)--(b6);
\draw (c1)--(c2)--(c3)--(c4)--(c5)--(c6);
\draw (d1)--(d2)--(d3)--(d4)--(d5)--(d6);
\draw (e1)--(e2)--(e3)--(e4)--(e5)--(e6);

\draw (a1) [fill=black] circle (\vr);
\draw (b1) [fill=white] circle (\vr);
\draw (c1) [fill=black] circle (\vr);
\draw (d1) [fill=white] circle (\vr);
\draw (e1) [fill=black] circle (\vr);

\draw (a2) [fill=black] circle (\vr);
\draw (b2) [fill=white] circle (\vr);
\draw (c2) [fill=white] circle (\vr);
\draw (d2) [fill=white] circle (\vr);
\draw (e2) [fill=white] circle (\vr);

\draw (a3) [fill=white] circle (\vr);
\draw (b3) [fill=white] circle (\vr);
\draw (c3) [fill=white] circle (\vr);
\draw (d3) [fill=white] circle (\vr);
\draw (e3) [fill=white] circle (\vr);

\draw (a4) [fill=black] circle (\vr);
\draw (b4) [fill=white] circle (\vr);
\draw (c4) [fill=white] circle (\vr);
\draw (d4) [fill=white] circle (\vr);
\draw (e4) [fill=white] circle (\vr);

\draw (a5) [fill=white] circle (\vr);
\draw (b5) [fill=white] circle (\vr);
\draw (c5) [fill=white] circle (\vr);
\draw (d5) [fill=white] circle (\vr);
\draw (e5) [fill=white] circle (\vr);

\draw (a6) [fill=black] circle (\vr);
\draw (b6) [fill=white] circle (\vr);
\draw (c6) [fill=white] circle (\vr);
\draw (d6) [fill=white] circle (\vr);
\draw (e6) [fill=white] circle (\vr);

\draw[anchor = east] (a) node {1};
\draw[anchor = east] (b) node {2};
\draw[anchor = east] (c) node {3};
\draw[anchor = east] (d) node {4};
\draw[anchor = east] (e) node {5};
\draw[anchor = north] (1) node {1};
\draw[anchor = north] (2) node {2};
\draw[anchor = north] (3) node {3};
\draw[anchor = north] (4) node {4};
\draw[anchor = north] (5) node {5};
\draw[anchor = north] (6) node {6};

\end{tikzpicture}
\end{center}

\caption{A minimum $(2,1)$-spreading set $S$ for $G=P_6 \cp P_5$.}
\label{fig:(2,1)-spreading set}
\end{figure}

Now let $n+m$ be even and let $S$ be an arbitrary $(2,1)$-spreading set. Suppose $|S| \leq \frac{n+m}{2}$. Firstly, according to Lemma \ref{lema: delta > q}, set $S$ must contain a pair of adjacent vertices. Now we use Bollob\' as' idea of replacing the grid with an uncolored chess board where each (unit) square corresponds to a vertex in $G$. Color each square corresponding to a vertex of $S$ with blue, others with white. Whenever a white square (adjacent to at least two blue squares) is colored blue, the total perimeter of the blue domain (sum of perimeters of blue polygons consisting of connected blue squares) does not increase. Since the perimeter of the whole chess board (every vertex/square is blue) is $2(n+m)$, the starting perimeter must also be at least $2(n+m)$. Every blue square contributes at most 4 units to the total perimeter. Since we know that at least 2 blue squares must be adjacent, their total perimeter can be at most 6. The remaining $\frac{n+m}{2} - 2$ blue squares contribute at most $2(n+m)-8$, which means the total perimeter is at most $2(n+m)-2$, a contradiction, hence $|S| \geq \frac{n+m}{2} +1$.

For the upper bound we consider two cases. If $n$ and $m$ are odd, let $S = \lbrace (1,1),(2,1)$, $(4,1),(6,1),\dots ,(m-3,1),(m-1,1),(m,1),(1,3),(1,5),\dots ,(1,n) \rbrace$. 
If $n$ and $m$ are even then let $S = \lbrace (1,1),(2,1),(4,1),(6,1)$, $\dots , (m,1),(1,3),(1,5),\dots$, $(1,n-3),(1,n-1)(1,n)\rbrace$. 
In either case, $S$ contains exactly $\frac{n+m}{2}+1$ vertices. One can easily check that a similar pattern of blue color spreading as described in the case of $n+m$ being odd proves that $S$ is a $(2,1)$-spreading set. \qed
\end{proof}

All known results about the $(p,q)$-spreading number on $m \times n$ grids are condensed in Table~\ref{Tabela1}. Note that the values in the third row are still open, while all other values of $\sigma_{(p,q)}(P_m\cp P_n)$ have been established.

\begin{table}[ht!]
\begin{center}
\begin{tabular}{| c | c c c c c }
\hline
\diagbox{p}{q} & \multicolumn{1}{c|}{$1$} & \multicolumn{1}{m{2.5em}|}{\centering$2$} & \multicolumn{1}{m{2.5em}|}{\centering$3$} & \multicolumn{1}{m{2.5em}|}{\centering$4$} &\multicolumn{1}{m{2.5em}}{\centering$\ge5$} \\ \hline
1 & \multicolumn{1}{c|}{$n$} & \multicolumn{4}{c}{$\mathbf{1}$}                                                        \\ \hline
2 & \multicolumn{1}{c|}{$\mathbf{\bigl\lceil\frac{n+m+1}{2}\bigr\rceil}$}  & \multicolumn{4}{c}{$\bigl\lceil \frac{n+m}{2}\bigr\rceil$}                                                         \\ \hline
3 & \multicolumn{1}{c|}{}  & \multicolumn{1}{c|}{}  & \multicolumn{1}{c|}{}  & \multicolumn{2}{c}{$\sigma_{(3,\infty)}$}    \\ \hline
4 & \multicolumn{5}{c}{$\mathbf{2n + 2m + \bigl\lfloor \frac{(m-2)(n-2)}{2}\bigr\rfloor}$}                                                                         \\ \hline
$\ge 5$ & \multicolumn{5}{c}{$nm$}                      
\end{tabular}
\caption{$(p,q)$-spreading numbers of grids $P_m\Box P_n$. Values in bold are results in this paper.}
\label{Tabela1}
\end{center}
\end{table}

\section{Concluding remarks}
\label{sec:conclude}

It follows from Remark~\ref{r:basic} that $\sigma_{(3,4)}(P_m\cp P_n)=\sigma_{(3,\infty)}(P_m\cp P_n)$ for any $m,n\ge 3$, since $\Delta(P_m\cp P_n)=4$. Unfortunately, the exact values of the bootstrap percolation numbers $m(P_m\cp P_n,3)$ ($=\sigma_{(3,\infty)}(P_m\cp P_n)$) 
are not known in general; see~\cite{HH-2023+,romer} for some known results and bounds, and~\cite{dnr-2022} for a related study. 
However, we believe that  $\sigma_{(3,3)}(P_m\cp P_n)= \sigma_{(3,4)}(P_m\cp P_n)=\sigma_{(3,\infty)}(P_m\cp P_n)$, and pose this as a conjecture.

\begin{conj}
\label{con:grid}
If $m,n\ge 3$, then $\sigma_{(3,3)}(P_m\cp P_n)= \sigma_{(3,4)}(P_m\cp P_n)$.
\end{conj}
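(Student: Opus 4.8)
The plan is to prove that $\sigma_{(3,3)}(P_m\cp P_n)=\sigma_{(3,4)}(P_m\cp P_n)$ by showing the only nontrivial inequality, namely $\sigma_{(3,3)}(P_m\cp P_n)\leq \sigma_{(3,4)}(P_m\cp P_n)$; the reverse inequality is immediate from Remark~\ref{r:basic}(5). By Remark~\ref{r:basic}(3), since $\Delta(P_m\cp P_n)=4$, we have $\sigma_{(3,4)}(P_m\cp P_n)=\sigma_{(3,\infty)}(P_m\cp P_n)=m(P_m\cp P_n,3)$, so it suffices to take a minimum $3$-percolating set $S$ of the grid and argue that $S$ is in fact a $(3,3)$-spreading set. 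Equivalently, one must show that whenever a white vertex $w$ has at least $3$ blue neighbors during the percolation process on the grid, at least one of those blue neighbors has at most $3$ white neighbors at that moment.

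First I would reduce to the following local statement: if $w$ is white and has three blue neighbors $x,y,z$ (in the grid $w$ has degree at most $4$), I need one of $x,y,z$ to have at most three white neighbors. The natural obstruction is a blue vertex of grid-degree $4$ all four of whose neighbors, other than possibly $w$, are white — i.e., a blue vertex surrounded by white. I would argue that this situation cannot persist: at the first moment a white vertex $w$ acquires its third blue neighbor, look at the geometry. A blue neighbor $x$ of $w$ that has four white neighbors is a ``lonely'' blue vertex, and I would show by an invariant (or a minimal-counterexample argument on the percolation process) that a lonely blue vertex cannot be the only blue neighbor causing a $3$-percolation step; more precisely, among the first vertices to get infected in the ordinary $3$-percolation process, at least one has a blue neighbor that is not lonely, and one can then re-order the infection sequence so that at every step the spreading color change rule with parameter $q=3$ is applicable. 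This is essentially a scheduling argument: the $3$-percolation infection order can be refined to an order in which, at each step, the freshly-infected vertex has a witness blue neighbor of current degree-to-white at most $3$.

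The cleanest route is probably an induction on the number of white vertices remaining, together with a case analysis on the shape of the blue region near the vertex about to be infected. Since in a grid every vertex has degree at most $4$, a blue vertex fails the $q=3$ condition only if it has exactly $4$ white neighbors, i.e., it is an isolated blue vertex whose four grid-neighbors are all white. If $w$ is white with three blue neighbors $x,y,z$, then at least two of $x,y,z$ are ``corners'' of $w$'s neighborhood that are mutually at distance $2$ through $w$; I would check that at least one of them is adjacent to another blue vertex (for instance, because two of $w$'s blue neighbors, together with $w$, force a small blue configuration that must have been built up earlier, and the vertex that infected the second-to-last of $x,y,z$ supplies such an adjacency), hence has at most three white neighbors, which serves as the required witness. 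Running this argument along the whole percolation sequence yields that the same set $S$ performs a valid $(3,3)$-spreading, giving $\sigma_{(3,3)}(P_m\cp P_n)\leq m(P_m\cp P_n,3)=\sigma_{(3,4)}(P_m\cp P_n)$.

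The main obstacle I anticipate is precisely controlling the ``lonely blue vertex'' case: it is conceivable that at some stage of the $3$-percolation process the only blue neighbors available to infect a given white vertex are each surrounded by white, so that the naive $q=3$ rule stalls even though $q=\infty$ percolation proceeds. Ruling this out requires either a clever potential/invariant on the blue region (something like: the blue region, during a $3$-percolation process started from a set with no isolated blue vertex, never has an isolated blue vertex — but a minimum $3$-percolating set of a grid may itself contain isolated blue vertices, so one needs to handle the very first steps separately), or a global re-ordering argument showing that any $3$-percolation closure of $S$ admits a $(3,3)$-spreading order. I would first try to prove the invariant ``no isolated blue vertex is ever created'' after the initial round, bootstrapping from the structure of extremal $3$-percolating sets in grids, and if that fails, fall back on the re-ordering/minimal-counterexample approach; making one of these two go through rigorously is where the real work lies, and is plausibly why the authors only state it as a conjecture.
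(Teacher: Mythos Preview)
The paper does not contain a proof of this statement: it is explicitly posed as an open \emph{conjecture} (Conjecture~\ref{con:grid}), so there is no argument in the paper to compare your proposal against. What you have written is therefore not a comparison target but an attempted resolution of an open problem.

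On its own merits, your proposal has a genuine gap at exactly the point you yourself flag. Your local claim --- that if a white vertex $w$ has three blue neighbours $x,y,z$ then at least one of $x,y,z$ must be adjacent to another blue vertex --- is simply false as a local statement. Concretely, take $w=(i,j)$ in the interior and suppose $(i-1,j)$, $(i+1,j)$, $(i,j-1)$ all belong to the initial set $S$ while every other vertex within distance $2$ of $w$ is white. These three vertices are pairwise non-adjacent in the grid, and each has all four of its grid-neighbours white; thus $w$ has three blue neighbours, each with exactly four white neighbours, and the $(3,3)$ rule cannot fire at $w$. Nothing in your argument rules out such a configuration occurring in (or arising from) a minimum $3$-percolating set, and minimum $3$-percolating sets of grids \emph{can} contain isolated blue vertices.

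Your fallback plan --- a global re-ordering of the percolation sequence, or an invariant that isolated blue vertices are never created after the first round --- is where the actual content would have to be, but you do not carry either of these out; you only state that ``making one of these two go through rigorously is where the real work lies.'' That is precisely the situation: you have correctly identified the obstruction and the shape a proof would need to take, but you have not supplied the key step that would turn the sketch into a proof. As the paper notes, this remains open.
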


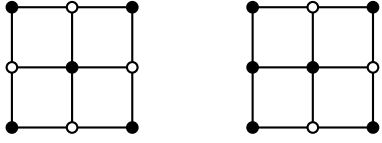
\begin{figure}[ht!]
\begin{center}
\begin{tikzpicture}[scale=0.8,style=thick,x=1cm,y=1cm]
\def\vr{2.5pt} 

\path (0,1) coordinate (a1);
\path (0,2) coordinate (b1);
\path (0,3) coordinate (c1);

\path (1,1) coordinate (a2);
\path (1,2) coordinate (b2);
\path (1,3) coordinate (c2);

\path (2,1) coordinate (a3);
\path (2,2) coordinate (b3);
\path (2,3) coordinate (c3);

\path (4,1) coordinate (A1);
\path (4,2) coordinate (B1);
\path (4,3) coordinate (C1);

\path (5,1) coordinate (A2);
\path (5,2) coordinate (B2);
\path (5,3) coordinate (C2);

\path (6,1) coordinate (A3);
\path (6,2) coordinate (B3);
\path (6,3) coordinate (C3);

\draw (a1)--(b1)--(c1);
\draw (a2)--(b2)--(c2);
\draw (a3)--(b3)--(c3);

\draw (a1)--(a2)--(a3);
\draw (b1)--(b2)--(b3);
\draw (c1)--(c2)--(c3);

\draw (A1)--(B1)--(C1);
\draw (A2)--(B2)--(C2);
\draw (A3)--(B3)--(C3);

\draw (A1)--(A2)--(A3);
\draw (B1)--(B2)--(B3);
\draw (C1)--(C2)--(C3);

\draw (a1) [fill=black] circle (\vr);
\draw (b1) [fill=white] circle (\vr);
\draw (c1) [fill=black] circle (\vr);

\draw (a2) [fill=white] circle (\vr);
\draw (b2) [fill=black] circle (\vr);
\draw (c2) [fill=white] circle (\vr);

\draw (a3) [fill=black] circle (\vr);
\draw (b3) [fill=white] circle (\vr);
\draw (c3) [fill=black] circle (\vr);

\draw (A1) [fill=black] circle (\vr);
\draw (B1) [fill=black] circle (\vr);
\draw (C1) [fill=black] circle (\vr);

\draw (A2) [fill=white] circle (\vr);
\draw (B2) [fill=black] circle (\vr);
\draw (C2) [fill=white] circle (\vr);

\draw (A3) [fill=black] circle (\vr);
\draw (B3) [fill=white] circle (\vr);
\draw (C3) [fill=black] circle (\vr);

\end{tikzpicture}
\end{center}

\caption{Minimum $(3,3)$- and $(3,1)$-spreading sets of $P_3 \cp P_3$.}
\label{fig:(3,1) ni enako (3,3)}
\end{figure}

One might ask, if the conjecture is proven to be true, whether one can extend the equality even further. While we did not find an answer to the question whether $\sigma_{(3,2)}(G) = \sigma_{(3,3)}(G)$ is true for all grids $G$, we can show that at least $\sigma_{(3,1)}(G)$ is not always equal to $\sigma_{(3,3)}(G)$ if $G$ is a grid. See the example of $G=P_3 \cp P_3$ in Figure~\ref{fig:(3,1) ni enako (3,3)}, from which it is clear that $\sigma_{(3,1)}(G) > \sigma_{(3,2)}(G) = \sigma_{(3,3)}(G)= \sigma_{(3,4)}(G)$, therefore in general $\sigma_{(3,1)}(P_m \cp P_n) \neq \sigma_{(3,3)}(P_m \cp P_n)$.

Even if one can prove Conjecture~\ref{con:grid}, this does not necessarily mean that one can determine $\sigma_{(3,3)}(P_m\cp P_n)$ or $\sigma_{(3,4)}(P_m\cp P_n)$. Based on some earlier partial results, this could be a hard problem, which we pose next. 

\begin{prob}
\label{prob-grids}
Determine $\sigma_{(3,q)}(P_m\Box P_n)$, where $q\le 4$, for all positive integers $m$ and $n$.  
\end{prob}

Obtaining upper and lower bounds on $\sigma_{(3,q)}(P_m\Box P_n)$ would be a natural way to approach Problem~\ref{prob-grids}. Alternatively, one could aim to obtain exact values while fixing one of the integers $m$ or $n$; we again refer to~\cite{HH-2023+,romer} for known values. 

Hypercubes, alias $n$-cubes, are one of the most important graph classes with numerous applications; one can define the $n$-cube, $Q_n$, as $K_2\cp\cdots\cp K_2$, where there are $n$ factors.  In particular, it is known that $\sigma_{(1,1)}(Q_n)=2^{n-1}$~\cite{AIM}. In addition, $\sigma_{(2,\infty)}(Q_n)=\lceil\frac {n}{2}\rceil+1$, as proved in~\cite{bal-2006}, and $\sigma_{(3,\infty)}(Q_n)=\lceil\frac {n(n+3)}{6}\rceil+1$, as proved in~\cite{MN}. A natural problem is to determine the $(p,q)$-spreading numbers of $n$-cubes for other values of $p$ and $q$. 

\begin{prob}
\label{prob1}
Determine $\sigma_{(p,q)}(Q_n)$, where $Q_n$ is the $n$-cube of dimension $n$ for all $p\in \mathbb{N},q\in \mathbb{N}\cup\{\infty\}$. 
\end{prob}

The most natural generalizations of hypercubes are Hamming graphs, which are the Cartesian products of several complete graphs. Problem~\ref{prob1} could thus be extended to determining $\sigma_{(p,q)}(K_{r_1} \Box\cdots\Box K_{r_n})$, where $K_{r_i}$ are complete graphs. This problem was considered for $P_3$-convexity in~\cite{bv-2019+}, where it was proved that $\sigma_{(2,\infty)}(K_{r_1} \Box\cdots\Box K_{r_n})=\lceil n/2\rceil +1$.

\section*{Acknowledgement}
B.B. was supported by the Slovenian Research Agency (ARRS) under the grants P1-0297, J1-2452, J1-3002, and J1-4008. T.D. was supported by the Slovenian Research Agency (ARRS) under the grants P1-0297 and J1-2452. J.H. was supported by the Slovenian Research Agency (ARRS) under the grants P1-0297.

\nocite{*}

\end{document}